\newcommand{\R}{\mathbb{R}}
\newcommand{\g}{\nabla}
\newcommand{\score}{\nabla \log}
\newcommand{\E}{\mathbb{E}}
\newcommand{\F}{\mathcal{F}}
\newcommand{\argmin}{\operatornamewithlimits{argmin}}
\newcommand{\KL}[2]{\text{KL}(#1||#2)}
\newcommand{\cov}{\text{cov}}
\newcommand{\dudt}{\frac{\partial u_t}{\partial t}}
\newcommand{\Au}{A \ast u}
\newcommand{\ustar}{u^*}
\newcommand{\vstar}{v^*}
\newcommand{\sstar}{\score \ustar}
\newcommand{\vbar}{\bar v}
\renewcommand{\d}{\partial}
\newcommand{\spnorm}[1]{\|#1\|} 
\newcommand{\rd}{\,\mathrm{d}}
\numberwithin{equation}{section}
\newtheorem{theorem}{Theorem}[section]
\newtheorem{lemma}[theorem]{Lemma}
\newtheorem{corollary}[theorem]{Corollary}
\newtheorem{proposition}[theorem]{Proposition}
\newtheorem{definition}[theorem]{Definition}
\newtheorem{remark}[theorem]{Remark}
\newtheorem{example}[theorem]{Example}
\begin{document}

\title{Transport based particle methods for \\the Fokker-Planck-Landau equation}

\author{Vasily Ilin\footnote{Department of Mathematics, University of Washington, Seattle, WA, USA. vilin@uw.edu.} 
\quad
Jingwei Hu\footnote{Department of Applied Mathematics, University of Washington, Seattle, WA, USA. hujw@uw.edu.} \quad 
Zhenfu Wang\footnote{Beijing International Center for Mathematical Research, Peking University, Beijing, China. zwang@bicmr.pku.edu.cn.} }
      
\maketitle

\begin{abstract}
    We propose a particle method for numerically solving the Landau equation, inspired by the score-based transport modeling (SBTM) method for the Fokker-Planck equation. This method can preserve some important physical properties of the Landau equation, such as the conservation of mass, momentum, and energy, and decay of estimated entropy. We prove that matching the gradient of the logarithm of the approximate solution is enough to recover the true solution to the Landau equation with Maxwellian molecules. Several numerical experiments in low and moderately high dimensions are performed, with particular emphasis on comparing the proposed method with the traditional particle or blob method.
\end{abstract}

{\small 
{\bf Key words.} Fokker-Planck-Landau equation, Maxwellian molecule, particle method, Kullback–Leibler divergence, neural network, score matching.

{\bf AMS subject classifications.} 35Q84, 65M75, 49Q22, 68T07.}

\section{Introduction}
The Vlasov-Landau equation \cite{Villani02} is a kinetic equation that describes the evolution of the probability density $f(t,x, v)$ of a plasma, with position $x\in \Omega \subset\R^d$ and velocity $v\in \R^d$: 
\begin{equation}
\begin{split}
    &\frac{\d f}{\d t} + v\cdot \g_{x} f + (E + v\times B)\cdot \g_v f 
    = Q(f,f), \label{eqn: Vlasov-Landau}\\
    &Q(f,f)(t,x,v) = \g_v\cdot \int_{\R^d} A(v - w)\left[f(t,x,w)\g_v f(t,x,v) - f(t,x,v)\g_w f(t,x,w)\right]\rd{w}, 
\end{split}
\end{equation}
where $E$ and $B$ are the electric and magnetic fields given externally or determined self-consistently by the Maxwell's equations. The integro-differential nonlinear operator $Q(f,f)$ is the so-called Landau operator which models collisions between charged particles. It can be derived from the Boltzmann collision operator by taking the scattering angle of collisions to be small. The kernel $A$ is a $d\times d$ matrix given by
$A_{i,j}(z) = |z|^\gamma (|z|^2 \delta_{i,j} - z_i z_j)$. The parameter $-d-1\leq \gamma \leq 1$  determines the strength of particle interactions. The most physically relevant case is $\gamma = -d=-3$, which corresponds to Coulomb interactions. The case $\gamma = 0$ corresponds to the so-called Maxwellian molecules, which admits many mathematical simplifications and hence is also widely considered in analysis and computation.

In this work, we will neglect spatial dependence in equation \eqref{eqn: Vlasov-Landau} and focus on the spatially homogeneous Landau equation:
\begin{equation}\label{eqn: Landau equation}
    \frac{\d f}{\d t} 
    = Q(f,f), 
\end{equation}
where $f=f(t,v)$ is a function of $t$ and $v$ only.

A comprehensive study of the Landau equation with Maxwellian molecules was done by Villani in \cite{villani1998spatially, villani2000decrease}. The main results are existence and uniqueness of smooth solutions, conservation of mass, momentum and energy, and decay of entropy and Fisher information. The author also showed how to rewrite the Landau equation in the form of the Fokker-Planck equation with a time-and-space-dependent diffusion matrix. This motivates applying the tools, which are used for the Fokker-Planck equation, to the Landau equation. Also due to this analogy, the Landau equation is often referred to as the Fokker-Planck-Landau equation in the literature. Less is known about the Landau equation with Coulomb interactions, the case when $\gamma = -3$. In fact, its well-posedness remains as an open problem for a long time due to the analogy of the equation to a diffusion-reaction equation. Only recently Guillen and Silvestre proved in \cite{guillen2023landau} that if the initial data is $C^1$, bounded above by a Gaussian function and in particular with finite Fisher information, then there exists a unique strictly positive solution bounded above by a Gaussian. This solution enjoys conservations of mass, momentum and energy, as well as decay of entropy and Fisher information, although the decay rate is not known, unlike the Maxwellian molecules case. 

Numerically approximating the Landau equation also presents a difficult problem due to its high complexity.
It is well-known that grid-based numerical methods such as finite difference/volume methods scale exponentially with dimension, which makes them not efficient for solving the full $6$-dimensional Vlasov-Landau equation. This motivates developing dimension-agnostic numerical methods such as particle methods.

One such method for solving the Fokker-Planck equation was recently proposed by  Shen et al. in \cite{SW22} and  Boffi and Vanden-Eijnden in \cite{boffi2023probability}, termed score-based transport modeling (SBTM) due to its similarity to score-based diffusion modeling. The authors took the gradient flow perspective on the Fokker-Planck equation and proposed a deterministic particle method for solving it using a neural network $s$ to approximate the score $\nabla_v \log f$. In particular, they showed that as the score-matching error 
\begin{equation}
    \int_{\R^d} |s - \nabla_v \log f |^2f\rd{v}
\end{equation}
goes to zero, the KL divergence between the approximate solution and the true solution goes to zero. This gives a theoretical guarantee for using the score-matching method for solving the Fokker-Planck equation. Following this line of thinking, it is clear that the neural network is not necessarily the only choice to approximate the score. In fact, any rich enough family of functions can be used. In this vein,
Maoutsa et al. \cite{maoutsa2020rkhs} used reproducible kernel Hilbert spaces to minimize the score-matching loss. The score matching idea has also been applied to other equations such as the McKean-Vlasov equation \cite{SW23, LWX24}.

Another deterministic particle method that we explore in this work was proposed by Carrillo et al. in \cite{carrillo2019blob, carrillo2020landau}, termed the blob method, first for the Fokker-Planck and porous medium equation and then for the Landau equation. The starting point is to recognize that $\nabla_v \log f$ is the gradient of the functional derivative of the Boltzmann entropy:
\begin{align}
    \nabla_v \log f = \nabla_v \frac{\delta H}{\delta f}, \quad H(f) = \int_{\R^d} f\log f \rd{v}.
\end{align}
If $f$ is approximated by the empirical measure of $N$ particles (Dirac measures), entropy is undefined, so regularized entropy is used instead:
\begin{equation}
    H_\varepsilon(f) = \int_{\R^d} f\log (\phi_\varepsilon \ast f) \rd{v},
\end{equation}
for some symmetric and positive kernel $\phi_\varepsilon$, often taken as the Gaussian. The authors showed that under the proposed numerical method the regularized entropy $H_\varepsilon$ decays over time, mimicking the decay of the entropy $H$ for the true solution.

The main contribution of this work is to show that the SBTM method developed for the Fokker-Planck equation can be used to approximate the Landau equation. Furthermore, the method can preserve some important physical properties of the Landau equation, such as the conservation of mass, momentum, and energy, and the dissipation of estimated entropy. We also obtain a KL/relative entropy bound similar to that for the Fokker-Planck equation in \cite{boffi2023probability,SW23}. We perform comprehensive numerical experiments in different dimensions,  with particular
emphasis on comparing the proposed method with the blob method \cite{carrillo2020landau}.

During the completion process of this manuscript, we became aware of the independent work \cite{HW24}. The method proposed in \cite{HW24} is very similar to ours, with a few key differences which we summarize below:
\begin{itemize}
\item The theoretical result obtained in \cite{HW24} is based on the assumption that the Landau equation is posed on the torus $\mathbb{T}^d$ rather than $\mathbb{R}^d$. This is a restrictive assumption under which many terms become bounded, for example, $|\nabla \log f(t,v)|$ is generally not a bounded function in $\mathbb{R}^d$, even when $f(t, v)$ is a Gaussian distribution. Unlike this work, our proof and method apply to the unbounded domain, which is the realistic setting for the Landau equation. 
\item Our numerical experiments focus on the comparison of the SBTM and the blob method in \cite{carrillo2020landau}. In particular, we demonstrate that for the simple isotropic BKW solution, both methods can perform well with the same number of particles. In fact, it is the anisotropic solution that distinguishes the power of SBTM over the blob method. Moreover, the SBTM method outperforms the blob method in higher dimensions, making it a potentially better choice for solving the full spatially inhomogeneous Vlasov-Landau equation. On the other hand, the work \cite{HW24} mainly considers the test problems from \cite{carrillo2020landau} for which both methods perform well.
\end{itemize}

The outline of this paper is as follows. In section 2 we review some theoretical properties of the Landau equation. These results are useful later to establish the theoretical estimate of the proposed numerical method. The score-based transport modeling (SBTM) method is introduced in section 3. In section 4 we prove a quantitative result justifying the use of the score-matching loss. Section 5 contains the numerical experiments. We conclude in section 6 with an outlook comparing SBTM to the blob method and describing future work.

\section{Properties of the Landau equation}

Our starting point is to write the Landau equation \eqref{eqn: Landau equation} in the form of a continuity equation -- a fundamental tool in the theory of optimal transport \cite{VillaniOT}. If a PDE can be written as a continuity equation, one can obtain its weak solution by solving a system of characteristic ODEs, where each ODE corresponds to a particle. The particle perspective is useful both for intuitive understanding and for numerical methods. See Appendix for a quick review on the continuity equation and its particle solution. 

To be consistent with the optimal transport literature, in the rest of the paper we use the following notation:
\begin{itemize}
\item We use $u_t(x)$ to denote the density function $f(t,v)$, with $x$ instead of $v$ as the independent variable.
\item We normalize $u_t$ to be a probability density, so that $\int_{\R^d} u_t(x) \rd{x} = 1$. Additionally, we do not distinguish between a density and its distribution.
\end{itemize}
In this notation the Landau equation \eqref{eqn: Landau equation} can be written as
\begin{align}
    &\dudt +\g\cdot (v_t u_t) = 0, \label{eqn: Landau equation1}\\
    &v_t(x) = -\int_{\R^d} A(x - y)[\score u_t(x) - \score u_t(y)]u_t(y) \rd y,\\
    &A_{i,j}(z) = |z|^\gamma (|z|^2 \delta_{i,j} - z_i z_j). \label{eqn: Landau collision kernel}
\end{align}

The following proposition combines Proposition 4 in \cite{villani1998spatially}, Theorem 2.3 in \cite{guillen2023landau} and Theorem 1.2 in \cite{guillen2023landau}, where the authors established existence and uniqueness of solutions to the Landau equation, as well as no finite-time blowup.

\begin{theorem}[Existence, uniqueness, and regularity]\label{prop: existence, uniqueness, regularity}
In case of the Maxwellian molecules ($\gamma=0$ in \eqref{eqn: Landau collision kernel}), if the initial data $u_0$ has finite energy
\begin{align}
    \int_{\mathbb{R}^d} |x|^2 \rd u_0(x) < \infty,
\end{align}
the Landau equation \eqref{eqn: Landau equation1} has a unique solution $u_t$ defined for all $t\geq 0$. Moreover, for all $t>0$, $u_t$ is bounded and belongs to $C^\infty(\R^d)$. If $u_0$ is subgaussian, then $u_t$ is subgaussian for all $t$:
\begin{align}
    u_0(x) \leq C_1\exp(-\beta |x|^2) < \infty \implies u_t(x) \leq C_2\exp(-\beta |x|^2) < \infty, \quad C_1,C_2>0.
\end{align}
In case of the Coulomb interactions, $\gamma=-3$ in \eqref{eqn: Landau collision kernel} with $d=3$, if $u_0$ is $C^1$, bounded above by a Gaussian
$$u_0(x) \leq C_0 \exp{(-\beta |x|^2)},$$
and has finite Fisher information
$$\int_{\mathbb{R}^d} |\score u_0|^2 \rd u_0(x) < \infty,$$
then there exists a unique smooth solution $u_t$ that does not blow up in finite time.
\end{theorem}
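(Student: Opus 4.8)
The plan is to recognize that this statement is assembled from three established external results rather than proved from scratch, and to indicate the analytic mechanism behind each piece. For the Maxwellian case ($\gamma = 0$), my starting point would be Villani's reformulation of the Landau operator as a Fokker--Planck operator $Q(u,u) = \g\cdot(\bar A\, \g u - \bar b\, u)$ with coefficients $\bar A(x) = \int_{\R^d} A(x-y)u(y)\rd y$ and $\bar b(x) = \int_{\R^d} (\g\cdot A)(x-y)u(y)\rd y$ built from the moments of $u_t$. When $\gamma = 0$ the kernel $A_{ij}(z) = |z|^2\delta_{ij} - z_iz_j$ is quadratic in $z$, so $\bar A$ depends only on the first two moments of $u_t$ and $\bar b$ only on the first. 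Since mass, momentum, and energy are conserved and the second moment is controlled by the finite-energy hypothesis, the coefficients stay bounded and smooth for all time, and $\bar A$ is non-degenerate (positive definite) provided $u_t$ is not concentrated on a line. Existence and uniqueness then follow from a fixed-point argument on the resulting quasilinear parabolic equation, and $C^\infty$ smoothing for $t>0$ is standard parabolic regularity for a uniformly parabolic operator with smooth coefficients.

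For the subgaussian propagation I would use a barrier argument: one constructs a Gaussian supersolution of the form $C_2\exp(-\beta|x|^2)$ to the Fokker--Planck form of the equation and applies the maximum principle, exploiting that the diffusion and drift coefficients only rescale the prefactor and do not degrade the decay rate $\beta$. The finite-energy hypothesis again supplies the uniform moment bounds needed to make the comparison hold for all $t$.

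The genuinely hard part is the Coulomb case ($\gamma = -3$, $d=3$), which invokes the recent Guillen--Silvestre theorem. Here the singular kernel $|z|^{-3}$ destroys both the uniform ellipticity and the coefficient boundedness that made the Maxwellian case tractable, and the central difficulty is ruling out finite-time blowup. The key mechanism is the monotonicity of the Fisher information $I(u_t) = \int_{\R^d} |\score u_t|^2 \rd u_t$: Guillen and Silvestre establish $\frac{d}{dt} I(u_t) \le 0$ along the flow, which, combined with the Gaussian ceiling on the initial datum, yields an a priori bound strong enough to propagate the Gaussian upper bound and prevent concentration. I would not attempt to reproduce their functional-inequality argument, since the Fisher-information monotonicity is itself the deep new analytic input; instead I would cite Theorems 1.2 and 2.3 of \cite{guillen2023landau} directly. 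In short, the Maxwellian half is a self-contained parabolic-regularity and barrier exercise, whereas the Coulomb half rests entirely on the cited breakthrough and is the step I expect to be the true obstacle.
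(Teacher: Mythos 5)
Your proposal matches the paper's approach exactly: the paper gives no proof of this theorem beyond the remark that it combines Proposition 4 of \cite{villani1998spatially} with Theorems 1.2 and 2.3 of \cite{guillen2023landau}, which is precisely the citation structure you identify. Your accompanying sketches of the underlying mechanisms (moment-determined Fokker--Planck coefficients for $\gamma=0$, a Gaussian barrier/maximum-principle argument for the subgaussian bound, and Fisher-information monotonicity as the decisive input for the Coulomb case) are accurate glosses on the cited works rather than a departure from the paper.
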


The next proposition shows that the Landau equation with Maxwellian molecules can be written in the form of the Fokker-Planck equation with a time-and-space-dependent diffusion matrix. Note that this derivation already appeared in \cite{villani1998spatially} but with several rescaling and normalization assumptions. 

First of all, we recall that the solution to the Landau equation \eqref{eqn: Landau equation1} preserves mass, momentum and energy:
\begin{equation}
\int_{\R^d} u_t(x)\rd{x}\equiv 1, \quad V:=\int_{\R^d} x u_t(x)\rd{x}=\int_{\R^d} x u_0(x)\rd{x}, \quad 2E:=\int_{\R^d} |x|^2 u_t(x)\rd{x}=\int_{\R^d} |x|^2 u_0(x)\rd{x}.
\end{equation}
This property can be easily verified using a weak form of the Landau operator. Since our numerical solution also preserves this property, we defer our proof to the later section. We further define the (time-dependent) second order moment by matrix
\begin{equation}
\Sigma(t):=\int_{\mathbb{R}^d} x\otimes x u_t(x) \rd x.
\end{equation}
With the notations $V$, $E$, and $\Sigma$, one can show the following:
\begin{proposition}[Fokker-Planck form]\label{prop: Landau equation in Fokker-Planck form}
    The Landau equation \eqref{eqn: Landau equation1} with Maxwellian molecules $(\gamma=0)$ can be written in the Fokker-Planck form:
    \begin{align}
        &\dudt + \nabla\cdot(v_t u_t ) = 0, \quad v_t(x)=b(x) - D_t(x)\score u_t, \\
        &b(x) = A*\nabla u_t= -(d-1)\left( x - V \right), \quad D_t(x)=A*u_t,
        \end{align}
with the component of the matrix $D_t(x)$ given by
        \begin{align}
        &D_{i,j}(t,x) = \delta_{i,j} \left( |x|^2 + 2E - 2(V\cdot x) \right) + V_i x_j + V_j x_i - x_i x_j - \Sigma_{i,j}(t), \\
        &\Sigma_{i,j}(t) = \Sigma_{i,j}(\infty) - \left( \Sigma_{i,j}(\infty) - \Sigma_{i,j}(0) \right) e^{-4dt}, \\
        &\Sigma_{i,j}(0) = \int_{\mathbb{R}^d} x_i x_j u_0(x) \rd x, \quad \Sigma_{i,j}(\infty) = \frac{1}{d}\left( \delta_{i,j} (2 E - |V|^2) + dV_i V_j \right).
    \end{align}
\end{proposition}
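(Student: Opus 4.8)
The plan is to obtain the Fokker--Planck form in two stages: first I would establish the structural decomposition $v_t = b - D_t \score u_t$ with closed-form drift $b$ and diffusion matrix $D_t$, and then I would derive and solve an autonomous linear ODE for the second-moment matrix $\Sigma(t)$, which is the only genuinely dynamical ingredient.

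For the decomposition, I would simply separate the two terms in the bracket of $v_t(x) = -\int_{\R^d} A(x-y)[\score u_t(x) - \score u_t(y)] u_t(y)\rd y$. The factor $\score u_t(x)$ does not depend on $y$, so it pulls out of the integral to give $-(A\ast u_t)(x)\, \score u_t(x) = -D_t(x)\score u_t(x)$ with $D_t = A \ast u_t$; the remaining term is $\int_{\R^d} A(x-y)\, u_t(y)\score u_t(y)\rd y = \int_{\R^d} A(x-y)\g u_t(y)\rd y = (A\ast \g u_t)(x) =: b(x)$, using $u_t\, \score u_t = \g u_t$. This yields the claimed forms of $b$ and $D_t$ with no computation. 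Next I would evaluate them explicitly for the Maxwellian kernel $A_{ij}(z) = |z|^2\delta_{ij} - z_i z_j$. For the drift, integrating by parts moves the gradient onto the kernel; the key computation is the divergence $\sum_j \partial_{z_j} A_{ij}(z) = 2z_i - (d+1)z_i = -(d-1)z_i$, and integrating against $u_t$ using conservation of mass and momentum gives $b(x) = -(d-1)(x-V)$. For the diffusion matrix, I would expand $|x-y|^2 = |x|^2 - 2x\cdot y + |y|^2$ and $(x-y)_i(x-y)_j$ inside $D_{ij}(t,x) = \int_{\R^d} A_{ij}(x-y)u_t(y)\rd y$ and integrate term by term, substituting the moments $\int u_t = 1$, $\int y\,u_t = V$, $\int |y|^2 u_t = 2E$, and $\int y_iy_j\,u_t = \Sigma_{ij}(t)$. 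This directly reproduces the stated formula for $D_{ij}(t,x)$.

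The main work, and the step I expect to be the principal obstacle, is deriving the closed ODE for $\Sigma(t)$. I would differentiate $\Sigma_{ij}(t) = \int_{\R^d} x_i x_j\, u_t\rd x$ in time, use the continuity equation, and integrate by parts to get $\dot\Sigma_{ij} = \int_{\R^d}(x_j (v_t)_i + x_i (v_t)_j)\,u_t\rd x$. Substituting $v_t = b - D_t\score u_t$ and integrating the diffusion contribution by parts requires the identity $\sum_k \partial_k D_{ik} = b_i$ --- i.e. the matrix divergence of $D_t$ equals the drift --- which I would verify directly from the explicit expression for $D_{ij}$ and which is consistent with $A\ast\g u_t = \g\cdot(A\ast u_t)$. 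Every resulting integrand is at most quadratic in $x$, so all the moment integrals close, and after collecting terms the right-hand side collapses to the linear relation $\dot\Sigma_{ij} = -4d\,(\Sigma_{ij} - \Sigma_{ij}(\infty))$ with stationary point $\Sigma_{ij}(\infty) = \frac{1}{d}(\delta_{ij}(2E - |V|^2) + dV_iV_j)$ identified by setting the right-hand side to zero.

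Solving this scalar linear ODE entrywise then yields the claimed $\Sigma_{ij}(t) = \Sigma_{ij}(\infty) - (\Sigma_{ij}(\infty) - \Sigma_{ij}(0))e^{-4dt}$. The delicate points are essentially bookkeeping: keeping the factor of $2$ coming from the two symmetric contributions, and tracking the cancellations among the $-(d-1)$ drift term and the $-x_ix_j$ and $-\Sigma_{ij}$ pieces of $D$ so that the coefficient $-4d$ emerges cleanly. Throughout, the integrability needed for the integrations by parts (absence of boundary terms at infinity) is guaranteed by the subgaussian decay of $u_t$ established in Theorem \ref{prop: existence, uniqueness, regularity}.
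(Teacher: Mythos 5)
Your proposal is correct and follows essentially the same route as the paper: the same splitting $v_t = A\ast\nabla u_t - (A\ast u_t)\,\score u_t$, the same kernel-divergence computation $\sum_j \partial_j A_{i,j}(z) = -(d-1)z_i$ giving $b(x) = -(d-1)(x-V)$, and the same term-by-term expansion of $(A\ast u_t)_{i,j}$ against the moments $1, V, 2E, \Sigma$. The one place the paper is silent---it dismisses the evolution of $\Sigma(t)$ as ``a straightforward calculation''---is exactly where you supply a valid derivation (differentiating $\Sigma_{ij}$ via the continuity equation, integrating by parts with the identity $\sum_k \partial_k D_{ik} = b_i$, and closing the quadratic moments to get $\dot\Sigma_{ij} = -4d\,(\Sigma_{ij} - \Sigma_{ij}(\infty))$), and your computation, including the stationary point $\Sigma_{ij}(\infty) = \frac{1}{d}\left(\delta_{ij}(2E - |V|^2) + dV_iV_j\right)$, checks out.
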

\begin{proof}
    We must show that the velocity field $v_t$ given in \eqref{eqn: Landau equation1} can be written in the form $b - D\score u_t$. First, we split up $v_t$ into two terms:
    \begin{align}
        v_t
        &= -\int_{\R^d} A(x - y)\left[\score u_t(x) - \score u_t(y)\right]u_t(y)\rd y \\
        &= (A\ast(\g u_t)) - (A\ast u_t) \score u_t,
    \end{align}
    where the first term is the drift term and the second term is the diffusion term. The drift term simplifies to
    \begin{equation}
        (A\ast(\g u_t))_i 
        = \sum_j \partial_j A_{i,j} \ast u_t = -(d-1) \left( x_i - V_i \right).
    \end{equation}
    As for the diffusion term, we first compute the convolution $A\ast u_t$:
    \begin{align}
        (A\ast u_t)_{i,j}
        &= \int_{\R^d} \delta_{i,j}|x - y|^2 u_t(y) \rd y - \int_{\R^d} (x_i - y_i)(x_j - y_j) u_t(y) \rd y \\
        &= \delta_{i,j}(|x|^2 + 2E - 2 V \cdot x) + V_i x_j + V_j x_i - x_i x_j - \Sigma_{i,j}. 
    \end{align}
   By a straightforward calculation, one can show that $\Sigma$ satisfies
    \begin{align}
        \Sigma_{i,j}(t) = \Sigma_{i,j}(\infty) - \left( \Sigma_{i,j}(\infty) - \Sigma_{i,j}(0) \right) e^{-4dt}.
    \end{align}
    This completes the proof. 
\end{proof}

\begin{lemma}[Boundedness of $A\ast u$]
\label{lemma: bounds on Maxwell convolution}
    For any distribution $u$ not concentrated on a line, and Maxwellian collision kernel $A(z) = |z|^2 I_d - z\otimes z$, the matrix $A\ast u$ is uniformly bounded from below and bounded from above, i.e., there exists $\varepsilon >0$ such that for all $x\in\R^d$,
    \begin{align}
        \varepsilon \leq \spnorm{A \ast u(x)} \leq 2E + |x-V|^2, \quad 2E = \int_{\R^d} |x|^2 \rd u(x), \quad V = \int_{\R^d} x \rd u(x),
    \end{align}
    where $\spnorm{\cdot}$ denotes the matrix 2-norm, and $\varepsilon$ can be taken to be
    \begin{align}
        \varepsilon = tr(\cov (u)) - \spnorm{\cov (u)}.
    \end{align}
    In particular, the bounds above hold for any continuous density.
\end{lemma}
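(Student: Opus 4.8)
The plan is to reduce everything to an explicit formula for the symmetric matrix $\Au(x)$ and then read off both bounds from a single scalar quadratic form. First I would compute the convolution directly, setting $\xi = x - V$ and $C = \cov(u)$, to obtain
\[
\Au(x) = \int_{\R^d}\bigl(|x-y|^2 I_d - (x-y)\otimes(x-y)\bigr)\,u(y)\rd y = \bigl(|\xi|^2 + \operatorname{tr}(C)\bigr) I_d - \xi\otimes\xi - C .
\]
This is nothing more than a repackaging of the componentwise identity already established inside the proof of Proposition~\ref{prop: Landau equation in Fokker-Planck form}, using $\Sigma = C + V\otimes V$ and $2E = \operatorname{tr}(C) + |V|^2$. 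The point of writing it this way is that $\Au(x)$ is manifestly symmetric, so its matrix $2$-norm is the largest absolute value of its eigenvalues, and I can work entirely with the quadratic form $q(e) := e^\top \Au(x)\, e$ ranging over unit vectors $e$.

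The core of the argument is then a two-sided estimate of
\[
q(e) = |\xi|^2 + \operatorname{tr}(C) - (e\cdot\xi)^2 - e^\top C\, e .
\]
For the lower bound I would use $(e\cdot\xi)^2 \le |\xi|^2$ together with $e^\top C\, e \le \spnorm{C}$ (valid since $C$ is positive semidefinite) to get $q(e) \ge \operatorname{tr}(C) - \spnorm{C} = \varepsilon$ for every unit $e$. This shows $\Au(x)$ is positive definite with smallest eigenvalue at least $\varepsilon$, whence $\spnorm{\Au(x)} = \lambda_{\max} \ge \lambda_{\min} \ge \varepsilon$. For the upper bound I would simply discard the two nonnegative subtracted terms, giving $q(e) \le |\xi|^2 + \operatorname{tr}(C) \le |\xi|^2 + 2E$, and hence $\spnorm{\Au(x)} \le 2E + |x-V|^2$ (indeed with $|V|^2$ of slack).

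The step that actually invokes the hypothesis, and the only one I expect to require care, is showing $\varepsilon > 0$. Since $C = \cov(u)$ is symmetric positive semidefinite, $\operatorname{tr}(C)$ is the sum of its eigenvalues and $\spnorm{C}$ is its largest eigenvalue, so $\varepsilon$ equals the sum of all the remaining eigenvalues. This is strictly positive exactly when $\operatorname{rank}(C) \ge 2$. I would then record the standard fact that $\operatorname{rank}(C)$ equals the dimension of the affine hull of $\operatorname{supp}(u)$ — concretely, $a^\top C\, a = 0$ forces $a\cdot(x-V)$ to be $u$-almost surely constant, confining $u$ to a hyperplane — so that $\operatorname{rank}(C) \le 1$ is equivalent to $u$ being concentrated on a line. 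The assumption therefore gives $\operatorname{rank}(C)\ge 2$ and $\varepsilon>0$. Finally, the closing claim that the bounds hold for any continuous density follows at once, since a line has Lebesgue measure zero in $\R^d$ and an absolutely continuous distribution cannot be supported on it.
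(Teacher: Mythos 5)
Your proof is correct, and it follows the same skeleton as the paper's: the same explicit convolution formula $\Au(x) = \bigl(|\xi|^2+\operatorname{tr}(C)\bigr)I_d - \xi\otimes\xi - C$ with $\xi=x-V$ and $C=\cov(u)$ (the paper writes this as $I_d(|x'|^2+2E-|V|^2)-(x'\otimes x'+\Sigma-V\otimes V)$, which is identical), the same cancellation of the $|\xi|^2$ terms producing $\varepsilon=\operatorname{tr}(\cov(u))-\spnorm{\cov(u)}$, and the same eigenvalue observation for strict positivity. The one substantive difference is the level at which you apply the estimates: the paper bounds $\spnorm{\Au(x)}$ directly via the (reverse) triangle inequality for the matrix norm, which literally yields only the scalar bound $\spnorm{\Au(x)}\geq\varepsilon$, whereas you estimate the quadratic form $e^\top (\Au(x))\,e$ over unit vectors, obtaining the stronger conclusion $\lambda_{\min}(\Au(x))\geq\varepsilon$, i.e.\ $\Au(x)\succeq\varepsilon I_d$. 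That stronger form is exactly what the paper later invokes --- ``$A\ast u-\varepsilon I\geq 0$'' in the proof of Theorem \ref{theorem: Landau KL bound} and strict positive definiteness in Proposition \ref{lemma: Landau KL bound} --- so your version proves the statement in the form it is actually used, at no extra cost. You also fill in two points the paper leaves implicit: the equivalence between $\operatorname{rank}(C)\leq 1$ and concentration of $u$ on a line (via $a^\top C\,a=0$ forcing $a\cdot(x-V)$ to be $u$-almost surely constant), and the closing remark that a continuous density cannot charge a Lebesgue-null line. Your upper bound, which simply discards the nonnegative terms $(e\cdot\xi)^2$ and $e^\top C\,e$ and uses $\operatorname{tr}(C)=2E-|V|^2\leq 2E$, reproduces the paper's bound $2E+|x-V|^2$ with the same $|V|^2$ of slack, but avoids the paper's slightly opaque $\inf_{|z|=1}$ manipulation (which contains an apparent typo, $|x\cdot z|^2$ for $|x'\cdot z|^2$).
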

\begin{proof}
    We denote $x' = x - V$ and compute
    \begin{align}
        A \ast u(x) 
        = I_d (|x'|^2 + 2E - |V|^2) - \left(x'\otimes x' + \Sigma - V\otimes V\right).
    \end{align}
    By triangle inequality, and noting that $\spnorm{x' \otimes x'} = |x'|^2$, we have
    \begin{align}
        \spnorm{A \ast u(x)}
        \geq |x'|^2 + tr(\Sigma - V\otimes V) - |x'|^2 - \spnorm{\Sigma - V\otimes V} 
        = tr(cov(u)) - \spnorm{cov(u)} > 0.
    \end{align}
    The last inequality is strict because $tr(\cov(u))$ is the sum of the eigenvalues of $\cov(u)$, whereas $\spnorm{\cov(u)}$ is the largest eigenvalue. The two quantities are equal if and only if $u$ is concentrated on a line.

    To get the upper bound we recall that $\Sigma$ is positive semi-definite and compute 
    \begin{align}
        \spnorm{A \ast u(x)}
        &\leq |x'|^2 + 2E - |V|^2 - \inf_{|z|=1}\left\{z^T \Sigma z + |x\cdot z|^2\right\} + |V|^2 \leq 2E + |x-V|^2.
    \end{align}
\end{proof}

We will need the following estimate of the score function later on. 
\begin{theorem}\label{thm: score upper bound}
    Let $u_t$ solve the Landau equation with Maxwellian molecules, that is the case $\gamma =0$ in any dimension,  with initial data $u_0 \in W^{2,1}\cap W^{2, \infty}$. Assume further that the initial data $u_0$ satisfies the estimates
    \begin{align}
        u_0(x) \leq C_1 \exp(-|x|^2/C_1), \quad \mbox{and}   \, \quad       |\score u_0(x)|\,  \leq C_2(1 + |x|).
    \end{align}
    Then for any $t\leq T$,  the score function grows at most linearly in $x$, i.e., there exists a universal constant $C>0$ such that
    \begin{align}
        |\score u_t| \leq C(1 + |x| + \sqrt{t}).
    \end{align}
\end{theorem}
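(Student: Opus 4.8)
The plan is to exploit the fact that for Maxwellian molecules the diffusion matrix $D_t(x)=A\ast u_t$ depends on $u_t$ only through its mean $V$, energy $E$, and second moment $\Sigma(t)$, all of which are explicitly known functions of $t$ by Proposition~\ref{prop: Landau equation in Fokker-Planck form}. Hence $u_t$ actually solves a \emph{linear} parabolic equation with smooth, explicitly given coefficients that are quadratic in $x$. Moreover, since $b=\g\cdot D$, the first-order term cancels and the equation takes the non-divergence form $\d_t u=\mathrm{tr}(D_t\nabla^2 u)+d(d-1)u$, which by Lemma~\ref{lemma: bounds on Maxwell convolution} is uniformly elliptic with ellipticity floor $\varepsilon(t)>0$ while $\|D_t(x)\|$ grows only quadratically. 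I would record these two facts first, since they reduce the statement to a gradient estimate for a linear parabolic PDE with coefficients unbounded at infinity.

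Next I would pass to $\phi=\log u_t$ and to the score $s=\g\phi$. A direct computation gives $\d_t\phi=\mathrm{tr}(D\nabla^2\phi)+\g\phi\cdot D\g\phi+d(d-1)$, and differentiating once more yields a closed equation for $P:=\tfrac12|s|^2$ of the schematic form
\[
\d_t P=\mathrm{tr}(D\nabla^2 P)+2(Ds)\cdot\g P-\mathrm{tr}(HDH)+\langle (s\cdot\g)D,\,H\rangle+s^{\top}\big((s\cdot\g)D\big)s,
\]
where $H=\nabla^2\phi$. The crucial structural observation, which I would verify at the outset, is that the last (cubic in $s$) term vanishes identically: writing $(s\cdot\g)D=2(x'\cdot s)I-(s\otimes x'+x'\otimes s)$ with $x'=x-V$, one gets $s^{\top}\big((s\cdot\g)D\big)s=2(x'\cdot s)|s|^2-2(x'\cdot s)|s|^2=0$. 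This removes the Riccati-type term that would otherwise drive finite-time blow-up of $|s|$, and is what makes a global-in-$x$ linear bound conceivable at all.

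I would then prove $P\le C(1+|x|^2+t)$ by a maximum-principle/barrier argument with barrier $Q=a(1+|x|^2)+bt$; since $|s|\le\sqrt{2Q}$ this gives exactly $|\score u_t|\le C(1+|x|+\sqrt t)$, with the $\sqrt t$ coming from the linear-in-time part of the barrier (the bounded net production of $|s|^2$ per unit time) and the linear growth in $x$ from the quadratic spatial part. At a would-be interior maximum of $P-Q$ one has $\g P=\g Q$, so $Hs=2ax$, and $\nabla^2 P\le 2aI$; the remaining work is to show that the elliptic term $\mathrm{tr}(D\nabla^2 P)$, the transport term $2(Ds)\cdot\g P$, the good dissipation $-\mathrm{tr}(HDH)$ and the cross term $\langle(s\cdot\g)D,H\rangle$ together stay $\le b$.

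The main obstacle is precisely this last balance. Both $\mathrm{tr}(D\nabla^2 P)$ and the dissipation $-\mathrm{tr}(HDH)$ are of order $|x|^2$ times curvature, so one cannot simply discard the good term and bound the cross term by Young's inequality against the ellipticity floor $\varepsilon$: that produces a spurious reaction coefficient of size $(1+|x|)^2$ and hence only a Gaussian-in-$x$ bound. Instead one must use the full, quadratically large diffusion $D$ in the dissipation to absorb the $O(|x|)$ cross term $\langle(s\cdot\g)D,H\rangle$ and to cancel the $O(|x|^2)$ elliptic contribution, leaving only an $O(1)$ production; this is where the explicit form of $(s\cdot\g)D$ and the touching-point relation $Hs=2ax$ are essential. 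Finally, since the domain is $\R^d$, I would legitimize the maximum principle using the subgaussian bound of Theorem~\ref{prop: existence, uniqueness, regularity} to preclude escape of the supremum to infinity (for instance by adding a vanishing super-quadratic term $\delta(1+|x|^2)^{1+\eta}$ to $Q$ and letting $\delta\to0$), and track the mild time-dependence of $\varepsilon(t)$ so that $C$ stays uniform on $[0,T]$.
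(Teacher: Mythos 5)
Your structural reductions are correct and verifiable: for $\gamma=0$ the equation is linear with $b=\g\cdot D$, so indeed $\d_t u=\mathrm{tr}(D\g^2 u)+d(d-1)u$; the schematic equation you wrote for $P=\tfrac12|s|^2$ is exactly right (I checked: $\d_t P=\mathrm{tr}(D\g^2P)-\mathrm{tr}(HDH)+2(Ds)\cdot\g P+\langle(s\cdot\g)D,H\rangle+s^{\top}((s\cdot\g)D)s$); and the vanishing of the cubic term via $(s\cdot\g)D=2(s\cdot x')I-(s\otimes x'+x'\otimes s)$ is a genuine and important observation. But note first that the paper itself gives no self-contained argument here: its ``proof'' is a citation to \cite{FGW}, where this score bound is established \emph{in tandem with} two-sided estimates on $\nabla^2\log u_t$. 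The decisive gap in your sketch is precisely the step you defer (``the remaining work is to show\dots''): the contact-point balance does not close with the barrier $Q=a(1+|x|^2)+bt$, and no mechanism in your outline makes it close. Concretely (take $V=0$): at a touching point $\mathrm{tr}(D\g^2P)\leq 2a\,\mathrm{tr}(D)=2a(d-1)(|x|^2+2E)$, while the only structurally negative quadratic term available is the $-4a|x|^2$ inside the cross term, since $\langle(s\cdot\g)D,H\rangle=2(s\cdot x)\mathrm{tr}(H)-2x\cdot(Hs)=2(s\cdot x)\mathrm{tr}(H)-4a|x|^2$; the net coefficient $2a(d-3)$ is nonnegative exactly in the physical dimension $d=3$ and above. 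The dissipation cannot repair this: the contact relations $Hs=2ax$, $|s|^2=2Q$ only force $\mathrm{tr}(HDH)\geq\varepsilon|Hs|^2/|s|^2\approx 2a\varepsilon$, an $O(a)$ gain, while the remaining bad terms are superquadratic in $|x|$ --- the transport term $2(Ds)\cdot\g Q=4a\,s\cdot(2Ex-\Sigma x)=O(a^{3/2}|x|^2)$ with no sign, and the radial-radial part of $2(s\cdot x)\mathrm{tr}(H)$, which after Young against $\varepsilon\spnorm{H}^2$ leaves a residue of order $a|x|^4/\varepsilon$ (using the tangential part $|x|^2(I-\hat x\otimes\hat x)$ of $D$ helps only with the tangential trace of $H$). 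Closing this requires independent control of $\nabla^2\log u_t$, which is exactly the companion estimate proved in \cite{FGW}, not a consequence of the explicit form of $(s\cdot\g)D$ alone.

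There is a second, independent gap: legitimizing the maximum principle on $\R^d$. Your penalization $\delta(1+|x|^2)^{1+\eta}$ localizes the supremum only if you already know $|s(x,t)|=o(|x|^{1+\eta})$ uniformly on $[0,T]$, and the subgaussian upper bound of Theorem \ref{prop: existence, uniqueness, regularity} gives no pointwise control of $\score u_t$ at all --- a density dominated by a Gaussian can oscillate with arbitrarily large score. Some a priori polynomial growth bound on the score (e.g., from two-sided Gaussian bounds on $u_t$ plus local parabolic regularity) must be established \emph{before} any Bernstein/barrier argument can run, and this is part of what the theorem asserts. In short: your reductions and the cancellation of the Riccati term are sound and in the spirit of the cited proof, but the two decisive analytic inputs --- closing the contact-point inequality (which, as stated, fails for $d\geq3$ and leaves a quartic residue in every dimension) and justifying the maximum principle at infinity --- are missing.
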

\begin{proof}
    This theorem is a major estimate  in the recent paper \cite{FGW}. Combining with the esitmates on $\nabla^2 \log u_t(x)$, the authors can establish entropic propagation of chaos from particle systems towards the Landau equation with Maxwellian molecules. We refer the readers to the proof in \cite{FGW}. 
\end{proof}

\section{Particle methods for the Landau equation}
As outlined in the Appendix, in order to (weakly) solve the continuity equation 
\begin{equation}
    \dudt + \nabla\cdot(v_t u_t) = 0
\end{equation}
 with particles, the standard technique is to assume that the particle solution is given by 
 $$u_t(x) = \frac{1}{n}\sum_{i=1}^n \delta_{X_i(t)}(x),$$
 and integrate the ODEs
\begin{equation}
    \frac{\rd X_i}{dt} = v_t(X_i).
\end{equation}

The challenge to solve the Landau equation \eqref{eqn: Landau equation1} lies in the approximation of the velocity field $v_t$. Since this velocity field mainly depends on the solution $u_t$ through the score $\score u_t$, we aim to find an approximation $s_t(x)$ to the score $\score u_t(x)$. The approximate velocity $v_t$ then becomes
\begin{equation} \label{eqn: velocity field}
  v_t[s_t](x)=  -\int_{\R^d} A(x - y)[s_t(x) -  s_t(y)]u_t(y) \rd y.
\end{equation}
Using particles $\{X_i\}_{i=1}^n$, this is
\begin{equation}
    v_t[s_t](X_i) = -\frac{1}{n}\sum_{j=1}^n A(X_i - X_j)[s_t(X_i) - s_t(X_j)].
\end{equation}

Before addressing the issue of how to estimate the score, we assume at this point that we have at our disposal a sufficiently regular time-dependent vector field $s_t(x)$. We establish the following result.
\begin{proposition}\label{prop: conservations of particle solution}
    Let $s_t(x)$ be any sufficiently regular time-dependent vector field. Consider the continuity equation
    \begin{align}
        &\dudt + \nabla\cdot (v_t u_t) = 0, \\
        &v_t(x) = -\int_{\R^d} A(x - y)[s_t(x) - s_t(y)] \rd u_t(y),
    \end{align}
    with initial density $u_0(x)$.
    \begin{enumerate}
        \item The density solution $u_t(x)$ to the above equation conserves mass, momentum and energy. Furthermore, the estimated entropy decays. That is,
        \begin{align}
             &\int_{\R^d} u_t\rd x = \int_{\R^d} u_0\rd x, \quad \int_{\R^d} xu_t\rd x = \int_{\R^d} xu_0\rd x, \quad \int_{\R^d} |x|^2 u_t\rd x = \int_{\R^d} |x|^2 u_0\rd x,\\
             &\frac{\rd}{\rd{t}} \mathcal{E}_t \leq 0, \quad \mathcal{E}_t := \int_0^t \int_{\R^d} s_\tau(x) \cdot v_\tau(x) u_\tau(x) \rd x \rd \tau.
            \end{align}
        \item The particle solution $\frac{1}{n}\sum_{i=1}^n \delta_{X_i(t)}(x)$ with $\frac{d X_i}{dt} = v_t(X_i)$ to the above equation conserves mass, momentum, and energy. Furthermore, the estimated entropy decays. That is,
        \begin{align}
            &\frac{1}{n} \sum_{i=1}^n X_i(t) = \frac{1}{n} \sum_{i=1}^n X_i(0), \quad \frac{1}{n} \sum_{i=1}^n |X_i(t)|^2 = \frac{1}{n} \sum_{i=1}^n |X_i(0)|^2.\\
            &\frac{\rd}{\rd{t}} \mathcal{E}_t \leq 0, \quad \mathcal{E}_t := \frac{1}{n} \int_0^t \sum_{i=1}^n s_\tau(X_i) \cdot v_\tau(X_i)\rd{\tau}.
        \end{align}
        \item The forward-Euler time-discretized particle solution $X_i^{k+1}  = X_i^k + \Delta t v_t(X_i^k)$ conserves mass and momentum, and decays estimated entropy:
        \begin{align}
            &\frac{1}{n} \sum_{i=1}^n X_i^{k+1} = \frac{1}{n} \sum_{i=1}^n X_i^k,\\
            &\mathcal{E}^{k+1} \leq \mathcal{E}^k,\quad \mathcal{E}^k := \frac{\Delta t}{n} \sum_{j=1}^k \sum_{i=1}^n s(X_i^j) \cdot v(X_i^j).
        \end{align}
        where $\{X_i^k\}_{i=1}^n$ are the particle locations at time $t_k=k\Delta t$.
    \end{enumerate}
\end{proposition}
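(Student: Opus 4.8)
The plan is to reduce every claim to three algebraic facts about the collision kernel $A(z)=|z|^\gamma(|z|^2 I_d - z\otimes z)$: it is symmetric, $A(-z)=A(z)$; it annihilates its argument, $A(z)z=0$; and it is positive semidefinite, $w^\top A(z) w = |z|^\gamma(|z|^2|w|^2 - (z\cdot w)^2)\ge 0$ by Cauchy--Schwarz. Each property combines with a single symmetrization trick — relabeling the integration variables $x\leftrightarrow y$ (or the particle indices $i\leftrightarrow j$) in the double integral/sum defining $v_t$ — to produce the desired conservation or monotonicity.

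For the density solution in part (1), I would start from the weak form of the continuity equation: for a test function $\varphi$,
$$\frac{\rd}{\rd t}\int_{\R^d}\varphi\, u_t\rd x = \int_{\R^d}\g\varphi\cdot v_t\, u_t\rd x,$$
obtained by integration by parts, justified by the subgaussian decay of $u_t$ from Theorem~\ref{prop: existence, uniqueness, regularity}. Mass follows from $\varphi\equiv 1$. For momentum, taking $\varphi(x)=x_k$ gives $\g\varphi=e_k$, and writing the right-hand side as a double integral against $u_t(x)u_t(y)$, the swap $x\leftrightarrow y$ together with $A(-z)=A(z)$ turns the integrand $[A(x-y)(s_t(x)-s_t(y))]_k$ into its own negative, so the integral vanishes. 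For energy, $\varphi(x)=|x|^2$ gives $\g\varphi=2x$; after the same symmetrization the integrand collapses to $(x-y)\cdot A(x-y)(s_t(x)-s_t(y))$, which is zero because $A(z)z=0$. Finally, $\frac{\rd}{\rd t}\mathcal E_t=\int s_t\cdot v_t\, u_t\rd x$, and the symmetrized integrand becomes $-\tfrac12 (s_t(x)-s_t(y))^\top A(x-y)(s_t(x)-s_t(y))$, which is $\le 0$ by positive semidefiniteness; hence $\mathcal E_t$ is nonincreasing.

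The particle statement in part (2) is the identical computation with $u_t$ replaced by the empirical measure, so integrals become $\frac1{n^2}\sum_{i,j}$ sums and the identities $\frac{\rd}{\rd t}\frac1n\sum_i X_i = \frac1n\sum_i v_t(X_i)$ and $\frac{\rd}{\rd t}\frac1n\sum_i|X_i|^2=\frac2n\sum_i X_i\cdot v_t(X_i)$ play the role of the weak form; the index swap $i\leftrightarrow j$ replaces $x\leftrightarrow y$, and the three properties of $A$ give momentum conservation, energy conservation, and entropy decay as before. For the forward-Euler scheme in part (3), momentum is still conserved exactly because $\frac1n\sum_i X_i^{k+1}=\frac1n\sum_i X_i^k+\frac{\Delta t}{n}\sum_i v(X_i^k)$ and the last sum vanishes by the same antisymmetry. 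Entropy decay is even more direct: each increment $\mathcal E^{k+1}-\mathcal E^k=\frac{\Delta t}{n}\sum_i s(X_i^{k+1})\cdot v(X_i^{k+1})$ equals $-\frac{\Delta t}{2n^2}\sum_{i,j}(s(X_i^{k+1})-s(X_j^{k+1}))^\top A(X_i^{k+1}-X_j^{k+1})(s(X_i^{k+1})-s(X_j^{k+1}))\le 0$ by the pointwise PSD inequality, so $\mathcal E^k$ decreases monotonically.

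The only place the mechanism genuinely breaks — and the reason part (3) claims mass and momentum but not energy — is the explicit discretization of the energy: expanding $|X_i^{k+1}|^2=|X_i^k|^2+2\Delta t\, X_i^k\cdot v(X_i^k)+\Delta t^2|v(X_i^k)|^2$, the linear term sums to zero by energy conservation, but the quadratic term $\frac{\Delta t^2}{n}\sum_i|v(X_i^k)|^2$ is strictly positive, so forward Euler inflates energy by an $O(\Delta t^2)$ amount per step. I therefore expect the main (minor) obstacle to be bookkeeping rather than conceptual: verifying that the symmetrization is legitimate requires enough regularity and decay on $s_t$ and $u_t$ to justify Fubini and the integration by parts, which the subgaussian bounds and the standing ``sufficiently regular'' hypothesis supply.
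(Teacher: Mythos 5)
Your proposal is correct and follows essentially the same route as the paper: the weak form of the continuity equation tested against $\phi = 1,\, x,\, |x|^2$ and $\nabla\phi = s_t$, combined with the $x \leftrightarrow y$ (resp.\ $i \leftrightarrow j$) symmetrization and the three kernel facts $A(-z)=A(z)$, $A(z)z=0$, and $A \succeq 0$, with part (3) handled by the identical discrete index swap and pointwise quadratic-form inequality. Your closing observation that forward Euler inflates energy by the strictly positive term $\frac{\Delta t^2}{n}\sum_i |v(X_i^k)|^2$ per step is a correct bonus the paper leaves implicit (it only remarks numerically that the energy error is of order $\Delta t$).
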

\begin{proof}
    To prove the first two items it suffices to prove that any weak solution to the continuity equation above conserves mass, momentum and energy, and has a non-positive entropy decay rate. To that end, let $u_t(x)$ be a weak solution and let $\phi: \R^d \to \R$ be a test function. By the definition of a weak solution,
    \begin{align}
        \frac{\rd}{\rd{t}}\int_{\R^d} \phi(x) \rd u_t(x) 
        &= \int_{\R^d} \nabla \phi(x) \cdot v_t(x) \rd u_t(x)\\
        &= -\int_{\R^d} \int_{\R^d} \nabla \phi(x)^T A(x - y) [s(x) - s(y)] \rd u_t(y)\rd u_t(x)\\
        &= \int_{\R^d} \int_{\R^d} \nabla \phi(y)^T A(x - y) [s_t(x) - s_t(y)] \rd u_t(y)\rd u_t(x)\\
        &= -\frac{1}{2}\int_{\R^d} \int_{\R^d} \left[\nabla\phi(x) - \nabla\phi(y)\right]^T A(x - y) [s_t(x) - s_t(y)] \rd u_t(y)\rd u_t(x).
    \end{align}
    Taking $\phi(x) = 1, x, |x|^2$, we get the desired conservations for $u_t$, using the fact that $A(z)z=0$. Taking $\nabla \phi =s_t$  shows the non-positivity of the estimated entropy decay rate
    \begin{align}
        \int_{\R^d} s_t(x) \cdot v_t(x) \rd u_t(x) = 
        -\frac{1}{2}\int_{\R^d} \int_{\R^d} \left[s_t(x) - s_t(y)\right]^T A(x - y) [s_t(x) - s_t(y)] \rd u_t(y)\rd u_t(x) \leq 0,
    \end{align}
    since $A$ is positive semi-definite.

To obtain the conservation of momentum for the time-discretized particle solution, we compute
\begin{align}
    \frac{1}{n}\sum_i X_i^{k+1} &= \frac{1}{n}\sum_i X_i^k - \frac{\Delta t}{n^2} \sum_{i,j} A(X_i^k - X_j^k)[s(X_i^k) - s(X_j^k)] \\
    &= \frac{1}{n}\sum_i X_i^k + \frac{\Delta t}{n^2} \sum_{i,j} A(X_i^k - X_j^k)[s(X_i^k) - s(X_j^k)] \\
    &= \frac{1}{n}\sum_i X_i^k.
\end{align}
The second-to-last line is obtained by interchanging $i$ and $j$ and the symmetry of $A$. This establishes the conservation of momentum. The mass conservation is obvious because each particle carries the same weight $1/n$. To obtain the decay of estimated entropy we compute
\begin{align}
    \mathcal{E}^{k+1} - \mathcal{E}^k 
    &= \frac{\Delta t}{n} \sum_{j=1}^{k+1} \sum_{i=1}^n \left[s(X_i^j) \cdot v(X_i^j)\right] - \frac{\Delta t}{n} \sum_{j=1}^{k} \sum_{i=1}^n \left[ s(X_i^j) \cdot v(X_i^j) \right] \\
    &= \frac{\Delta t}{n} \sum_{i=1}^n \left[s(X_i^{k+1}) \cdot v(X_i^{k+1})\right] \\
    &= -\frac{\Delta t}{n^2}\sum_{i,j=1}^n s(X_i^{k+1})^T A(X_i^{k+1} - X_j^{k+1}) [s(X_i^{k+1}) - s(X_j^{k+1})] \\
    &= -\frac{\Delta t}{2n^2}\sum_{i,j=1}^n \left[s(X_i^{k+1}) - s(X_j^{k+1})\right]^T A(X_i^{k+1} - X_j^{k+1}) [s(X_i^{k+1}) - s(X_j^{k+1})] \leq 0.
\end{align}
\end{proof}

\begin{remark}
Note that above discussion is completely independent of the choice of $s_t(x)$. As long as it is a reasonable approximation to the score $\nabla \log u_t(x)$, one can obtain an approximate solution to the Landau equation, either in the continuous form or in the particle form, such that the conservation property and an estimated entropy decay property would hold. This viewpoint can put a large class of particle methods under the same umbrella, such as the blob method \cite{carrillo2020landau} or the SBTM method to be discussed below.
\end{remark}

\subsection{Score Based Transport Modeling (SBTM)}

We now come to the approximation of the score for the Landau equation. Motivated by the score matching idea due to \cite{hyvarinen2005scorematching, maoutsa2020rkhs} and subsequently used in \cite{boffi2023probability}, we propose to find the approximation $s(x)$ by minimizing the loss
\begin{equation}
\int_{\R^d} (\nabla \log u-s)^T(A*u)(\nabla \log u-s)u\rd{x}:=\int_{\R^d} |\nabla \log u-s|^2_{A*u}u\rd{x},
\end{equation}
where $A$ is the kernel matrix given by \eqref{eqn: Landau collision kernel}.

In fact,
\begin{align}
    \int_{\R^d} |\nabla \log u-s|^2_{A*u}u\rd{x}
    &= \int_{\R^d} \left(|\score u|_{\Au}^2 +|s|_{\Au}^2\right) u\rd{x}  - 2 \int_{\R^d}  s^T (A*u)( \nabla \log u) u\rd{x}\\
    &= \int_{\R^d} \left(|\score u|_{\Au}^2 +|s|_{\Au}^2\right) u\rd{x} - 2\int_{\R^d} s^T (\Au) \g u \rd x\\
    &= \int_{\R^d} \left(|\score u|_{\Au}^2 +|s|_{\Au}^2\right) u\rd{x} + 2\int_{\R^d} u\nabla \cdot ((\Au)s) \rd x,
\end{align}
where we used integration by parts on the second-to-last line. Since the term $\int_{\R^d} |\score u|_{\Au}^2 u\rd{x}$ is a constant in $s$, minimizing the score-matching loss 
\begin{equation}\label{eqn: original score matching loss}
  \int_{\R^d} |\nabla \log u-s|^2_{A*u}u\rd{x}
\end{equation}
is equivalent to minimizing
\begin{equation}\label{eqn: new score matching loss}
  \int_{\R^d} \left(|s|_{\Au}^2  + 2 \nabla \cdot ((\Au)s) \right)u \rd x.
\end{equation}

While $\score u$ and loss \eqref{eqn: original score matching loss} are undefined for a sum of Dirac deltas $\frac{1}{n}\sum_{i=1}^n \delta_{X_i}(x)$, loss \eqref{eqn: new score matching loss} is well-defined and is equal to 
\begin{equation}
    \frac{1}{n}\sum_{i=1}^n \left[s(X_i)^T D(X_i) s(X_i) + 2\nabla \cdot \left(D(X_i) s(X_i)\right)\right], \quad D(X_i) := \frac{1}{n}\sum_{j=1}^n A(X_i-X_j).
\end{equation}
Moreover, if $X_1, \dots, X_n$ are i.i.d. samples from $u$, then by the law of large numbers
\begin{equation}
    \frac{1}{n}\sum_{i=1}^n \left[s(X_i)^T D(X_i) s(X_i) + 2\nabla \cdot \left(D(X_i) s(X_i)\right)\right] \to \E_{u}\left(|s|_{\Au}^2 + 2\nabla \cdot ((\Au) s)\right),
\end{equation}
almost surely as $n\to\infty$. 

Computing $D(X_i)$ for $i=1,\dots,n$ naively results in quadratic complexity $O(n^2)$. One may think that this does not matter since the time integrating step costs $O(n^2)$ operations but in practice time integration is much faster than neural network back-propagation because several gradient descent steps are performed at each time step. To save the computational cost, we use 
the unweighted loss in numerical experiments
\begin{equation}
    \frac{1}{n}\sum_{i=1}^n \left[|s(X_i)|^2 + 2\nabla \cdot s(X_i)\right],\label{eqn: implicit score matching loss}
\end{equation}
and defer the study of the weighted loss to future work. 

Automatic differentiation can be used to compute the divergence of $s$ but for efficiency reasons we use the \textit{denoising trick} following \cite{boffi2023probability}. The denoising trick is a way to approximate the divergence of a vector field, which makes the SBTM method approximately $100$ times faster. As the authors of \cite{boffi2023probability} point out, using the denoising trick prevents overfitting, so even if loss \eqref{eqn: implicit score matching loss} were as cheap to compute as loss \eqref{eqn: loss}, it would still be advisable to use the denoising trick. Lemma B.3 of \cite{boffi2023probability} states that given $Z\sim N(0, I_d)$ and $\alpha>0$,  
\begin{align}
    \lim_{\alpha \to 0} (2\alpha)^{-1}\E\left[ s(x + \alpha Z)\cdot Z - s(x - \alpha Z)\cdot Z \right] = \nabla \cdot s(x). 
\end{align}
Thus, the loss we minimize is
\begin{align} \label{eqn: loss}
    L(s) &= \frac{1}{n}\sum_{i=1}^n \left[|s(X_i)|^2 + \alpha^{-1}(s(X_i + \alpha Z) - s(X_i - \alpha Z))\cdot Z\right].
\end{align}

To approximate $\score u$ well using the minimization problem
\begin{equation}\label{eqn: score matching minimization problem}
    s = \argmin_{s\in\F} L(s),
\end{equation}
the class of functions $\F$ must be rich enough to contain $\score u$. The authors of \cite{boffi2023probability} propose to take $\F$ to be the family of neural networks since they are universal approximators, and to minimize the loss $L(s)$ using gradient descent.

The pseudocode for solving the Landau equation is as follows. 
\begin{pseudo}*
    \hd{SBTM}(u_0, n, t_0, t_{\text{end}}, \Delta t, K) \\
    $t := t_0$ \\
    sample $\{X_i\}_{i=1}^n$ from $u_0$ \\
    initialize NN: $s \approx \arg\min_{s \in \mathcal{F}} \frac{1}{n}\sum_{i=1}^n |s(X_i) - \score u_0(X_i)|^2$ \\
    while $t < t_{\text{end}}$\\+
        $t := t + \Delta t$\\
        Optimize $s$: do $K$ gradient descent steps on $L(s)$ in \eqref{eqn: loss}\\
        for $i = 1, ..., n$\\+
            $X_i := X_i - \frac{\Delta t}{n}\sum_{j=1}^n A(X_i - X_j)[s(X_i) - s(X_j)]$\\--
    output particle locations $X_1,\dots,X_n$
\end{pseudo}
The initialization is done by performing gradient descent steps until the loss is below some pre-specified threshold.

\section{Kullback–Leibler divergence bound}
How good is the score matching approach? In this section, we try to answer this question by quantifying the Kullback–Leibler (KL) divergence between the exact solution of the Landau equation and that of SBTM. The KL divergence or the relative entropy can quantify the distance between two probability densities. It is defined as
\begin{align}
    \KL{u_1}{u_2} = \begin{cases}  \int_{\mathbb{R}^d} \log \frac{u_1}{u_2} \rd u_1(x), \, &\mbox{if\, } \, u_1 \, \mbox{is abosolutely continuous w.r.t. }  u_2, \\
     + \infty,  & \mbox{otherwise}. 
    \end{cases} 
\end{align}
By convexity, one can easily see that $\KL{u_1}{u_2} \geq 0$ and $\KL{u_1}{u_2} = 0$ if and only if $u_1 = u_2$. 

Before analyzing the Landau equation, we review a similar bound from the work of  Boffi and Vanden-Eijnden \cite{boffi2023probability}.

\subsection{KL bound for the Fokker-Planck equation}
\begin{theorem}\label{thm: KL bound in Fokker-Planck}
    Fix $b_t(x), D_t(x)$, such that $D_t$ is a positive semi-definite matrix. Let $s_t(x) \in \R^d$ be sufficiently regular and $u_t$ and $\ustar_t$ satisfy the continuity equations
    \begin{align*}
        &\frac{\partial u^*_t}{\partial t} + \nabla\cdot(v_t^*\ustar_t) = 0, \quad v_t^* = b_t - D_t\score \ustar_t,\\
        &\dudt + \nabla\cdot(v_tu_t) = 0, \quad v_t = b_t - D_ts_t.
    \end{align*}
    Then the KL-divergence is controlled by the score matching loss
    \begin{align*}
      \frac{\rd}{\rd{t}}  \KL{u_t}{u_t^*}  &\leq \frac{1}{2}\int_{\R^d}|s_t - \score u_t|^2_{D_t}u_t \rd x.
    \end{align*}
\end{theorem}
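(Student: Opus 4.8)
The plan is to differentiate the relative entropy along the two flows and reduce its time derivative to a single quadratic form in $D_t$ that can then be controlled by a weighted Young inequality. First I would differentiate $\KL{u_t}{\ustar_t}=\int_{\R^d}u_t\log(u_t/\ustar_t)\rd x$ in time. Expanding the product rule and using mass conservation $\int_{\R^d}\d_t u_t\rd x=0$ (obtained by integrating the continuity equation), the time derivative collapses to
\[
\frac{\rd}{\rd t}\KL{u_t}{\ustar_t}=\int_{\R^d}\d_t u_t\,\log\frac{u_t}{\ustar_t}\rd x-\int_{\R^d}\frac{u_t}{\ustar_t}\,\d_t\ustar_t\rd x.
\]
I would then substitute $\d_t u_t=-\g\cdot(v_tu_t)$ and $\d_t\ustar_t=-\g\cdot(v_t^*\ustar_t)$, integrate by parts in each term (discarding boundary terms), and use the identities $\g\log(u_t/\ustar_t)=\score u_t-\score\ustar_t$ and $\g(u_t/\ustar_t)=(u_t/\ustar_t)(\score u_t-\score\ustar_t)$ to arrive at the clean flux identity
\[
\frac{\rd}{\rd t}\KL{u_t}{\ustar_t}=\int_{\R^d}(\score u_t-\score\ustar_t)\cdot(v_t-v_t^*)\,u_t\rd x.
\]

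Next I would substitute the two velocity fields. Because both share the drift $b_t$, it cancels in the difference, leaving $v_t-v_t^*=(b_t-D_ts_t)-(b_t-D_t\score\ustar_t)=D_t(\score\ustar_t-s_t)$, so that
\[
\frac{\rd}{\rd t}\KL{u_t}{\ustar_t}=\int_{\R^d}(\score u_t-\score\ustar_t)^T D_t(\score\ustar_t-s_t)\,u_t\rd x.
\]

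The algebraic heart of the proof is then to bound the integrand pointwise. I would split $\score\ustar_t-s_t=(\score\ustar_t-\score u_t)+(\score u_t-s_t)$. The first piece yields $-(\score u_t-\score\ustar_t)^T D_t(\score u_t-\score\ustar_t)\le 0$ by positive semi-definiteness of $D_t$. For the cross term I would apply the weighted Young inequality $2p^T D_t q\le p^T D_t p+q^T D_t q$ (valid since $D_t\succeq 0$, as it follows from $(p-q)^T D_t(p-q)\ge 0$) with $p=\score u_t-\score\ustar_t$ and $q=\score u_t-s_t$. The resulting $\tfrac12|\score u_t-\score\ustar_t|^2_{D_t}$ contribution is absorbed by the negative term, leaving exactly $\tfrac12|s_t-\score u_t|^2_{D_t}$ in the integrand, which is the claimed bound after integrating against $u_t$.

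The main obstacle is the very first step: justifying differentiation under the integral sign and, above all, the vanishing of the boundary terms in the integration by parts. These involve $\log(u_t/\ustar_t)$ and the ratio $u_t/\ustar_t$, which are delicate in the tails where $\ustar_t$ is small, and they also implicitly require strict positivity of $\ustar_t$ so that $\KL{u_t}{\ustar_t}$ and its integrand are well-defined. This is precisely where the ``sufficiently regular'' hypothesis enters, together with the Gaussian decay and positivity of solutions that Theorem \ref{prop: existence, uniqueness, regularity} supplies when the estimate is later applied to the Landau equation. Once this differentiation identity is secured, the remaining two steps are purely pointwise algebra.
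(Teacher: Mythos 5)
Your proposal is correct and follows essentially the same route as the paper: the paper's one-line proof defers to Proposition 1 of Boffi and Vanden-Eijnden, which is exactly the relative-entropy evolution computation you carry out, and the same flux identity $\frac{\rd}{\rd t}\KL{u_t}{\ustar_t}=\int_{\R^d}(v_t-v_t^*)\cdot(\score u_t-\score \ustar_t)\,u_t\rd x$ is derived verbatim in the paper's own proof of Proposition \ref{lemma: Landau KL bound}. Your subsequent splitting, the sign from positive semi-definiteness of $D_t$, and the weighted Young inequality reproduce the cited argument, and you rightly flag that the integration by parts and positivity of $\ustar_t$ are where the regularity hypotheses are consumed.
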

\begin{proof} This follows immediately from the time evolution of the KL/relative entropy between $u_t$ and $\ustar_t$. 
See for instance the proof of Proposition 1 in \cite{boffi2023probability}.
\end{proof}

By combining the above theorem with Proposition \ref{prop: Landau equation in Fokker-Planck form}, we immediately obtain the following corollary.
\begin{corollary}\label{corollary: isotorpic KL bound}
    Let $\ustar_t$ satisfy the Landau equation with Maxwellian molecules
\begin{align}
\frac{\partial \ustar_t}{\d t} + \nabla \cdot (\vstar_t \ustar_t) = 0, \quad \vstar_t(x) = -\int_{\R^d} A(x - y)[\sstar_t(x) - \sstar_t(y)] \ustar_t(y) \rd y,
\end{align}
and let $u_t$ satisfy the continuity equation
\begin{align}
    \dudt + \nabla \cdot (v_t u_t) = 0, \quad v_t(x) = b(x) - D_t(x)s_t(x),
\end{align}
where $b(x)$ and $D_t(x)$ are as in Proposition \ref{prop: Landau equation in Fokker-Planck form}. 
Further, assume that the initial first and second moments match, i.e.,
\begin{align}
    \int_{\R^d} x u_0(x)\rd x = \int_{\R^d} x \ustar_0(x)\rd x, \quad \int_{\R^d} x_i x_j u_0(x) \rd x = \int_{\R^d} x_i x_j\ustar_0(x) \rd x .
\end{align}
Then the time-derivative of the KL-divergence is controlled by the score matching loss
\begin{align*}
  \frac{\rd}{\rd{t}} \KL{u_t}{u_t^*}
   &\leq \frac{1}{2}\int_{\R^d}|s_t - \score u_t|^2_{D_t}u_t \rd x.
\end{align*}
\end{corollary}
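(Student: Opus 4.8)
The plan is to reduce the corollary to a direct invocation of Theorem \ref{thm: KL bound in Fokker-Planck}. That theorem takes a \emph{single} fixed pair $(b_t, D_t)$, with $D_t$ positive semi-definite, shared by the two continuity equations, and then produces exactly the claimed KL estimate. So the entire content of the proof is to exhibit such a common pair for $u_t$ and $\ustar_t$ and to verify the positivity hypothesis; the differential inequality then follows verbatim. First I would apply Proposition \ref{prop: Landau equation in Fokker-Planck form} to the true solution $\ustar_t$, rewriting its Landau velocity as $\vstar_t = b^* - D^*_t \sstar_t$ with $b^*(x) = -(d-1)(x - V^*)$ and $D^*_t = A\ast \ustar_t$, where $V^*, E^*, \Sigma^*(t)$ are the moments of $\ustar_t$. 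The structural point to extract is that $b^*$ and $D^*_t$ depend on $\ustar_t$ \emph{only through these moments}, and that the moments satisfy a closed evolution: $V^*$ and $E^*$ are conserved, while $\Sigma^*(t)$ is given by the explicit formula $\Sigma^*_{i,j}(t) = \Sigma^*_{i,j}(\infty) - (\Sigma^*_{i,j}(\infty) - \Sigma^*_{i,j}(0))e^{-4dt}$ from Proposition \ref{prop: Landau equation in Fokker-Planck form}, determined entirely by the initial data.

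The key step, and the main obstacle, is to show that the drift and diffusion in the equation for $u_t$ are \emph{literally the same functions} $(t,x)\mapsto(b^*(x), D^*_t(x))$. This is where the moment-matching hypothesis is essential. Since $u_0$ and $\ustar_0$ share first and second moments, we have $V = V^*$ and $\Sigma(0) = \Sigma^*(0)$, hence also $E = E^*$. Feeding these identical initial data into the closed moment evolution propagates the match to all times, giving $\Sigma(t) = \Sigma^*(t)$ for every $t$, and therefore $b = b^*$ and $D_t = D^*_t$ as functions on $[0,\infty)\times\R^d$. The delicate bookkeeping here is that $D^*_t = A\ast\ustar_t$ depends on the time-$t$ second moment of $\ustar_t$, so only the propagated identity $\Sigma^*(t) = \Sigma(t)$ — not merely matched initial moments — guarantees that $D^*_t$ coincides with the prescribed moment-dependent matrix $D_t$ appearing in the equation for $u_t$. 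With this identification, both continuity equations share the common pair $(b, D_t)$ required by Theorem \ref{thm: KL bound in Fokker-Planck}.

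Finally I would check the positivity hypothesis. For $\gamma = 0$ the kernel is $A(z) = |z|^2 I_d - z\otimes z$, and for any unit vector $e$ one has $e^T A(z) e = |z|^2 - (z\cdot e)^2 \geq 0$ by Cauchy--Schwarz, so $A(z)\succeq 0$ pointwise; averaging against the probability density $\ustar_t$ then gives $D_t = A\ast\ustar_t \succeq 0$. (Lemma \ref{lemma: bounds on Maxwell convolution} supplies even a strict uniform lower bound away from the degenerate case, but only semi-definiteness is needed here.) With $(b, D_t)$ common to both equations and $D_t$ positive semi-definite, Theorem \ref{thm: KL bound in Fokker-Planck} applies directly and yields $\frac{\rd}{\rd{t}}\KL{u_t}{\ustar_t} \leq \frac{1}{2}\int_{\R^d} |s_t - \score u_t|^2_{D_t}\, u_t \rd x$, which is the assertion of the corollary.
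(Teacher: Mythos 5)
Your proposal is correct and follows essentially the same route as the paper: identify the common pair $(b, D_t)$ via Proposition \ref{prop: Landau equation in Fokker-Planck form} together with moment matching and conservation, verify that $D_t = A \ast \ustar_t$ is positive semi-definite, and then invoke Theorem \ref{thm: KL bound in Fokker-Planck}. The paper compresses this into two sentences; your only addition is the (correct and worthwhile) explicit bookkeeping that the matched initial moments propagate to $\Sigma(t) = \Sigma^*(t)$ for all $t$ through the closed-form exponential evolution of the second moment, which the paper leaves implicit under the phrase ``because of conservations.''
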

\begin{proof}
    Because of conservations, the diffusion matrix $D_t(x)$ and the drift vector $b(x)$ match for all $x,t$. 
    We also need to show that $D_t(x)$ is positive semi-definite. This follows by noting that 
    \begin{align}
        D_t(x) = A\ast \ustar_t(x),
    \end{align}
    and recalling that $A$ is positive semi-definite.
\end{proof}
This can be turned into a numerical method for solving the Landau equation with Maxwellian molecules by using $v_t = b_t - D_ts_t$ as the velocity field in the particle solution. However, this method does not preserve momentum and energy. Furthermore, this numerical scheme heavily relies on rewriting the Landau equation in the Fokker-Planck form, which is not possible for general kernels $A$, and specifically is not possible for the Coulomb kernel. Because of these reasons, we prove a similar inequality to Theorem \ref{thm: KL bound in Fokker-Planck} for the particle solution using the velocity field \eqref{eqn: velocity field}.

\subsection{KL bound for the Landau equation}
Let $\ustar_t$ be a $C^1$ density satisfying the Landau equation \eqref{eqn: Landau equation1} with Maxwellian molecules $(\gamma=0)$
\begin{align} \label{eq: true landau}
    &\frac{\partial \ustar_t}{\d t} + \nabla \cdot (\vstar_t \ustar_t) = 0, \\
    &\vstar_t(x) = -\int_{\R^d} A(x - y) (\sstar_t(x) - \sstar_t(y)) \ustar_t(y) \rd y, 
\end{align}
and let $s_t:\R^d \to \R^d$ be a time-dependent vector field and $u_t$ be a $C^1$ density satisfying the approximate Landau equation
\begin{align} \label{eq: appox landau}
    &\dudt + \nabla \cdot (v_t u_t) = 0, \\
    &v_t(x) = -\int_{\R^d} A(x - y) (s_t(x) - s_t(y)) u_t(y) \rd y.
\end{align}
For both equations, we assume that they subject to the same initial condition $u_0 = \ustar_0$, so $\KL{u_0}{\ustar_0}=0$. 

Since the momentum and energy are conserved in both equations (by properties of the Landau equation and Proposition \ref{prop: conservations of particle solution}), we denote them by $V$ and $E$ respectively:
\begin{align}
    V = \int_{\R^d} x u_t(x) \rd x = \int_{\R^d} x \ustar_t(x) \rd x, \quad 2E = \int_{\R^d} |x|^2 u_t(x) \rd x = \int_{\R^d} |x|^2 \ustar_t(x) \rd x.
\end{align}
Additionally, we denote the (time-dependent) second moment by matrices $\Sigma$ and $\Sigma^*$:
\begin{align}
    \Sigma(t) = \int_{\R^d} x \otimes x u_t(x)\rd x, \quad \Sigma^*(t) = \int_{\R^d} x \otimes x\ustar_t(x)\rd x.
\end{align}

We first establish an intermediate result about the KL divergence.
\begin{proposition}\label{lemma: Landau KL bound}
    The KL divergence between $u_t$ and $\ustar_t$ is controlled by the score-matching loss plus an extra term that depends on the difference between second moments of $u_t$ and $\ustar_t$:
    \begin{align}
        \frac{\rd}{\rd{t}}KL(u_t||\ustar_t) \leq 2\int_{\R^d} | \score u_t - s_t |^2_{A \ast u_t} u_t \rd x + \int_{\R^d} (\score \ustar_t)^T (\Sigma - \Sigma^*) (\score u_t - \score \ustar_t)u_t \rd x.
    \end{align}
\end{proposition}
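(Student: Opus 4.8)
The plan is to differentiate the relative entropy in time and reduce everything to the velocity-field difference $v_t-\vstar_t$ tested against the score difference. Writing $\KL{u_t}{\ustar_t}=\int_{\R^d}u_t\log(u_t/\ustar_t)\rd x$, differentiating, substituting the two continuity equations \eqref{eq: true landau}--\eqref{eq: appox landau}, integrating by parts, and using conservation of mass (so that the $\int\partial_t u_t\rd x$ term drops and $\nabla\log(u_t/\ustar_t)=\score u_t-\score\ustar_t$), I expect to arrive at the clean identity
\begin{align}
    \frac{\rd}{\rd t}\KL{u_t}{\ustar_t}=\int_{\R^d}(v_t-\vstar_t)\cdot(\score u_t-\score\ustar_t)\,u_t\rd x.
\end{align}
I abbreviate $g:=\score u_t-\score\ustar_t$ for the remainder.

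The central difficulty is that $v_t$ is built from $u_t$ while $\vstar_t$ is built from $\ustar_t$, so the tested quantity mixes the two measures and cannot be symmetrized directly. To get around this I would introduce the auxiliary field $\hat v_t(x):=-\int_{\R^d}A(x-y)(\score u_t(x)-\score u_t(y))u_t(y)\rd y$, which has a \emph{dual representation}: it is a genuine Landau field against $u_t$, and simultaneously — because it uses the true score of $u_t$, so that $A\ast(\score u_t\,u_t)=A\ast\nabla u_t=b$ — it equals $b(x)-(A\ast u_t)(x)\score u_t(x)$ in the Fokker-Planck form of Proposition \ref{prop: Landau equation in Fokker-Planck form}. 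Splitting $v_t-\vstar_t=(v_t-\hat v_t)+(\hat v_t-\vstar_t)$, the first difference is a pure Landau field against $u_t$ with ``score'' $w:=s_t-\score u_t$, and after the standard symmetrization ($A$ even, relabel $x\leftrightarrow y$) its contribution becomes $-\mathcal D[w,g]$, where $\mathcal D[p,q]:=\tfrac12\iint(p(x)-p(y))^\top A(x-y)(q(x)-q(y))\,u_t(x)u_t(y)\rd x\rd y$ is the positive semi-definite Landau Dirichlet form. For the second difference I would use the Fokker-Planck form for both $\hat v_t$ and $\vstar_t$: since $u_t$ and $\ustar_t$ share mass, momentum $V$ and energy $E$, Proposition \ref{prop: Landau equation in Fokker-Planck form} forces the drifts $b$ to coincide and the diffusions to differ only by the spatially constant matrix $A\ast u_t-A\ast\ustar_t=\Sigma^*-\Sigma$. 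This gives $\hat v_t-\vstar_t=-(A\ast u_t)g+(\Sigma-\Sigma^*)\score\ustar_t$, so testing against $g\,u_t$ produces exactly $-\int_{\R^d}g^\top(A\ast u_t)g\,u_t\rd x$ plus the advertised second term $\int_{\R^d}(\score\ustar_t)^\top(\Sigma-\Sigma^*)(\score u_t-\score\ustar_t)u_t\rd x$.

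At this stage the identity reads $\frac{\rd}{\rd t}\KL{u_t}{\ustar_t}=-\mathcal D[w,g]-\int_{\R^d}g^\top(A\ast u_t)g\,u_t\rd x+(\text{second term})$, and it remains to absorb the first two terms into $2\int_{\R^d}|w|^2_{A\ast u_t}u_t\rd x$. I would record the elementary bound, valid for any vector field $p$ since $A(x-y)$ is positive semi-definite, $(p(x)-p(y))^\top A(x-y)(p(x)-p(y))\le 2p(x)^\top A(x-y)p(x)+2p(y)^\top A(x-y)p(y)$, which upon integration yields $\mathcal D[p,p]\le 2\int_{\R^d}p^\top(A\ast u_t)p\,u_t\rd x$. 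Cauchy–Schwarz and Young for the semi-definite form then give $-\mathcal D[w,g]\le\tfrac12\mathcal D[w,w]+\tfrac12\mathcal D[g,g]$, while the same elementary bound applied to $g$ shows $\tfrac12\mathcal D[g,g]\le\int_{\R^d}g^\top(A\ast u_t)g\,u_t\rd x$, which is precisely cancelled by the surviving negative single integral. What is left is $\tfrac12\mathcal D[w,w]\le\int_{\R^d}|w|^2_{A\ast u_t}u_t\rd x\le 2\int_{\R^d}|w|^2_{A\ast u_t}u_t\rd x$, which is the claimed first term.

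I expect the main obstacle to be conceptual rather than computational: recognizing that the mixed-measure term $\int_{\R^d}\vstar_t\cdot g\,u_t\rd x$ must be routed through the Fokker-Planck representation — legitimate only because $\vstar_t$ uses the exact score of $\ustar_t$ — and exploiting the Maxwellian feature that $A\ast u_t-A\ast\ustar_t$ is a spatially constant matrix once the first two moments are matched. The remaining work is analytic bookkeeping: justifying the integrations by parts (vanishing boundary terms) and the finiteness of $\mathcal D[g,g]$, $\int_{\R^d}g^\top(A\ast u_t)g\,u_t\rd x$ and the like, for which the subgaussian decay and at-most-linear score growth from Theorem \ref{prop: existence, uniqueness, regularity} and Theorem \ref{thm: score upper bound}, together with the uniform bounds on $A\ast u_t$ from Lemma \ref{lemma: bounds on Maxwell convolution}, are exactly the estimates required.
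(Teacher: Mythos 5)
Your proof is correct and follows essentially the same route as the paper: the same starting identity $\frac{\rd}{\rd t}\KL{u_t}{\ustar_t}=\int_{\R^d}(v_t-\vstar_t)\cdot(\score u_t-\score\ustar_t)\,u_t\rd x$, the same auxiliary field ($\bar v_t$ in the paper, your $\hat v_t$) with the same splitting, and the same key observation that matched momentum and energy make $A\ast u_t-A\ast\ustar_t=\Sigma^*-\Sigma$ a spatially constant matrix. The only deviation is in bookkeeping: where the paper bounds $\int_{\R^d}(v_t-\bar v_t)\cdot g\,u_t\rd x$ (with $g=\score u_t-\score\ustar_t$, $w=s_t-\score u_t$) by splitting into local and convolution pieces with pointwise Young's inequality --- retaining a net $-\tfrac12\int_{\R^d}|g|^2_{A\ast u_t}u_t\rd x$ that is reused in Theorem \ref{theorem: Landau KL bound} --- you symmetrize into the Dirichlet form $-\mathcal{D}[w,g]$ and use bilinear Cauchy--Schwarz, which yields the sharper constant $1$ but consumes the entire dissipation term, harmless here since the stated bound discards it anyway.
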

\begin{proof}
We omit the subscript $t$ throughout this proof to simplify notation. We define an auxillary vector field
\begin{equation}
    \vbar(x) := -\int_{\R^d} A(x - y) (\score u(x) - \score u(y)) u(y) \rd y,
\end{equation}
and compute
\begin{align}
    \frac{\rd}{\rd{t}}KL(u||\ustar) 
    &= -\int_{\R^d}\frac{u}{\ustar} \partial_t \ustar \rd x + \int_{\R^d} \log \frac{u}{\ustar} \partial_t u \rd x\\
    &=\int_{\R^d} \frac{u}{u^*}\nabla \cdot (v^*u^*)\rd{x}-\int_{\R^d} \nabla\cdot (vu)\log \left(\frac{u}{u^*}\right)\rd{x}\\
    &= -\int_{\R^d} \vstar \cdot \nabla \left(\frac{u}{\ustar}\right)\ustar \rd x + \int_{\R^d} v \cdot \score \left(\frac{u}{\ustar}\right) u \rd x\\
    &= \int_{\R^d} ( v-\vstar ) \cdot (\score u - \score \ustar)u \rd x \\
    &= \int_{\R^d} (v-\vbar) \cdot (\score u - \score \ustar)u \rd x+\int_{\R^d} (\vbar-\vstar) \cdot (\score u - \score \ustar)u \rd x . \label{eqn: Landau KL bound decomposition}
\end{align}
We bound the first term in \eqref{eqn: Landau KL bound decomposition} with Young's inequality.
\begin{align}
    &\int_{\R^d} (v-\vbar) \cdot (\score u - \score \ustar)u \rd x\\
    &=\int_{\R^d} (\nabla \log u-s)^T(A*u)\cdot (\nabla \log u-\nabla\log u^*)u\rd{x}-\int_{\R^d} A*(u(\nabla \log u-s))\cdot (\score u - \score \ustar)u\rd{x}\\
    &\leq \int_{\R^d} |\score u - s|^2_{A \ast u} u \rd x + \frac{1}{4} \int_{\R^d} |\score u - \score \ustar|^2_{A \ast u} u \rd x - \int_{\R^d} A\ast(u(\score u - s)) \cdot (\score u - \score \ustar)u \rd x,
\end{align}
where
\begin{align}
    &- \int_{\R^d} A\ast(u(\score u - s)) \cdot (\score u - \score \ustar)u \rd x\\
    &\leq \int_{\R^d}\int_{\R^d} | \score u(y) - s(y) |^2_{A(x-y)}u(x)u(y)\rd x \rd y + \frac{1}{4} \int_{\R^d}\int_{\R^d} | \score u(x) - \score \ustar(x) |^2_{A(x-y)} u(x)u(y)\rd x \rd y \\
    &= \int_{\R^d} | \score u - s |^2_{A \ast u} u \rd x + \frac{1}{4} \int_{\R^d} | \score u - \score \ustar |^2_{A\ast u} u \rd x.
\end{align}
Together we have
\begin{align} \label{eqn: term1}
    \int_{\R^d} (v- \vbar) \cdot \left(\score \frac{u}{\ustar}\right)u \rd x \leq 2\int_{\R^d} | \score u - s |^2_{A \ast u} u \rd x + \frac{1}{2}\int_{\R^d} \left| \score \frac{u}{\ustar} \right|^2_{A\ast u} u \rd x.
\end{align}

Now we analyze the second term in \eqref{eqn: Landau KL bound decomposition}. Using the fact that $A$ is the Maxwellian kernel, we can write
\begin{align}
    \vstar & = -(A\ast \ustar) \score \ustar - (d-1)\left(x - V \right),\\
    \vbar & = -(A\ast u) \score u - (d-1)\left(x - V \right),
\end{align}
so the drift term in $\vbar-\vstar $ cancels out:
\begin{align}
    \vbar-\vstar = (A\ast \ustar) \score \ustar-(A\ast u) \score u.
\end{align}
Thus
\begin{align} \label{eqn: term2}
    & \int_{\R^d} (\vbar-\vstar ) \cdot (\score u - \score \ustar)u \rd x \\
    =& \int_{\R^d} ((A\ast \ustar) \score \ustar-(A\ast u) \score u) \cdot (\score u - \score \ustar)u \rd x \\
    =& -\int_{\R^d} | \score u - \score \ustar |^2_{A \ast u} u \rd x + \int_{\R^d} (\score \ustar)^T (A\ast (\ustar-u)) (\score u - \score \ustar)u \rd x.
\end{align}

Combining \eqref{eqn: term1} and \eqref{eqn: term2}, we have
\begin{align}
    \frac{\rd}{\rd{t}}KL(u||\ustar) 
    \leq 2&\int_{\R^d} | \score u - s |^2_{A \ast u} u \rd x -\frac{1}{2}\int_{\R^d} \left| \score u - \score \ustar\right|^2_{A \ast u} u \rd x \\
    + &\int_{\R^d} (\score \ustar)^T (A\ast (\ustar-u)) \left( \score u - \score \ustar\right)u \rd x,
\end{align}
where the second term is non-positive because $A\ast u$ is positive semi-definite (in fact, strictly positive definite by Lemma \ref{lemma: bounds on Maxwell convolution}). 

Finally, we note that $A\ast(\ustar-u) = \Sigma - \Sigma^*$. This can be seen directly by computing
\begin{align}
    &(A\ast u)_{i,j}(x) = \delta_{i,j}(|x|^2 + 2E - 2V\cdot x) + (x_iV_j + x_jV_i - x_ix_j) - \Sigma_{i,j}(t), \\
    &(A\ast u^*)_{i,j}(x) = \delta_{i,j}(|x|^2 + 2E - 2V\cdot x) + (x_iV_j + x_jV_i - x_ix_j) - \Sigma^*_{i,j}(t),
\end{align}
and subtracting the two equations.
\end{proof}

The second term in Theorem \ref{lemma: Landau KL bound} is hard to control a priori, but if the initial data is isotropic, it vanishes.
\begin{corollary}\label{corollary: isotropic solutions to Landau equation}
    Suppose the initial condition $u_0$ is radially symmetric and $s_t$ is in the form $s_t(x) = x \hat{s}_t(|x|)$ for some $\hat{s}:\R\to\R^d$. Then the KL divergence is controlled only by the score matching loss.
    \begin{align}
        \frac{\rd}{\rd{t}}KL(u_t||\ustar_t) \leq 2\int_{\R^d} | \score u_t - s_t |^2_{A \ast u_t} u_t \rd x.
    \end{align}
\end{corollary}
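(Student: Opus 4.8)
The plan is to show that the extra term
\begin{align*}
\int_{\R^d} (\score \ustar_t)^T (\Sigma - \Sigma^*)(\score u_t - \score \ustar_t)\, u_t \rd x
\end{align*}
appearing in Proposition \ref{lemma: Landau KL bound} vanishes identically, and the cleanest route is to prove the stronger fact that $\Sigma(t) = \Sigma^*(t)$ for every $t$. I would obtain this from a single structural observation: under the stated hypotheses both $u_t$ and $\ustar_t$ stay radially symmetric for all time, and any radially symmetric density $\rho$ has isotropic second moment, $\int_{\R^d} x\otimes x\, \rho \rd x = \tfrac{1}{d}\bigl(\int_{\R^d} |x|^2 \rho \rd x\bigr) I_d$, since the off-diagonal entries vanish by odd symmetry and the diagonal entries are all equal. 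Because $u_t$ and $\ustar_t$ carry the same conserved energy $2E$, radial symmetry then forces $\Sigma(t) = \tfrac{2E}{d} I_d = \Sigma^*(t)$, so $\Sigma - \Sigma^* \equiv 0$ and the extra term disappears, leaving only the score-matching loss.

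The heart of the matter is thus propagation of radial symmetry, which rests on the rotational equivariance of the Maxwellian kernel. For any rotation $R$ one computes directly that $A(Rz) = |Rz|^2 I_d - (Rz)\otimes(Rz) = R\,A(z)\,R^T$. The hypothesized form of the approximate score is equivariant in the same sense: $s_t(Rx) = Rx\,\hat{s}_t(|Rx|) = R\,s_t(x)$. Combining these two identities with the change of variables $y \mapsto Ry$ in the defining integral \eqref{eq: appox landau} for $v_t$ yields, whenever $u_t$ is radial, the equivariance $v_t(Rx) = R\,v_t(x)$ of the velocity field.

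To convert this into genuine propagation I would fix $R$ and set $\tilde u_t(x) := u_t(R^{-1}x)$. Using the equivariance of $A$ and of $s_t$ one checks that $\tilde u_t$ solves the same approximate Landau equation \eqref{eq: appox landau} with the identical field $s_t$; since $\tilde u_0 = u_0$ by radial symmetry of the initial data, uniqueness of the characteristic ODE flow (equivalently, of the continuity equation for a sufficiently regular velocity field) gives $\tilde u_t = u_t$ for all $t$, i.e., $u_t$ is radial. For the exact solution $\ustar_t$, radial symmetry is inherited from the rotational invariance of the Landau collision operator together with the uniqueness statement of Theorem \ref{prop: existence, uniqueness, regularity}: the rotate $R^{-1}_\#\ustar_t$ solves \eqref{eq: true landau} with the same radial initial data and hence coincides with $\ustar_t$.

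I expect the main obstacle to be the apparent circularity in the propagation step: the equivariance identity $v_t(Rx) = R\,v_t(x)$ is derived under the assumption that $u_t$ is already radial, yet radial symmetry of $u_t$ is precisely what is to be concluded. The way to break this is exactly the "rotate the whole solution" device above — comparing $u_t$ and $\tilde u_t = u_t(R^{-1}\cdot)$ as two solutions of one and the same evolution and invoking uniqueness — rather than attempting to verify equivariance pointwise along the unknown flow. Once radial symmetry is established, the reduction $\Sigma = \Sigma^* = \tfrac{2E}{d}I_d$ and the vanishing of the extra term are immediate, and Proposition \ref{lemma: Landau KL bound} collapses to the desired bound by the pure score-matching loss.
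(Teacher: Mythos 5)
Your proposal takes essentially the same route as the paper's (sketched) proof: show that $u_t$ and $\ustar_t$ remain isotropic, so that $\Sigma(t)$ and $\Sigma^*(t)$ are scalar matrices with common trace $2E$, making the extra term in Proposition \ref{lemma: Landau KL bound} vanish and leaving only the score-matching loss. If anything, your write-up is more complete than the paper's two-line sketch: where the paper merely asserts that the velocity fields are radial when the densities are isotropic and calls the rest ``a straightforward computation,'' you supply the actual propagation mechanism --- the equivariances $A(Rz) = R\,A(z)\,R^T$ and $s_t(Rx) = R\,s_t(x)$, combined with the rotate-the-whole-solution-and-invoke-uniqueness device --- thereby resolving the circularity that the paper's sketch leaves implicit.
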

\begin{proof}[Proof sketch]
    It suffices to prove that $u_t$ and $\ustar_t$ stay isotropic. Indeed, in that case $\Sigma(t)$ and $\Sigma^*(t)$ are scalar matrices and thus are equal because their trace is equal to $2E$, which is conserved. So the difference $\Sigma(t)-\Sigma^*(t)$ in Theorem \ref{lemma: Landau KL bound} is zero.

    So it suffices to show that the velocity fields $v_t$ and $v^*_t$ are radial if $u_t$ and $\ustar_t$ are isotropic. It is a straightforward computation, using the assumed form of $s_t$.
\end{proof}
The assumed form of $s_t$ is not restrictive, because we can simply take $\hat{s}_t:\R \to \R^d$ to be the neural network.

If the initial data is not isotropic, we only get a short time horizon estimate, but $\KL{u_t}{\ustar_t}$ still goes to $0$ as the time-integrated score matching loss goes to zero. 

\begin{theorem}\label{theorem: Landau KL bound}
    Fix $T>0$ and suppose that the following conditions hold:
    \begin{enumerate}
        \item The initial data is in a Sobolev space 
        \begin{align}
            u_0 \in W^{2,1} \cap W^{2,\infty},
        \end{align} 
        and there exists a constant $C_0$ such that
        \begin{align}
            |\score u_0(x)| < C_0(1 + |x|).
        \end{align}
        \item There exist constants $C_0'$ and $\beta > 0$ such that the initial condition is subgaussian
            \begin{align}
                \ustar_0(x) = u_0(x) \leq C_0'e^{-3\beta |x|^2}.
            \end{align}
    \end{enumerate}
    Then for $t\leq T$ small enough such that $\KL{u_t}{\ustar_t} \leq 4$, the following estimate holds
    \begin{align}
        \KL{u_t}{\ustar_t} &\leq 2 e^{Mt} \int_0^t \int_{\R^d} | \score u_\tau - s_\tau |^2_{A \ast u_\tau} u_\tau \rd x \rd\tau,
    \end{align}
    where 
    \begin{align}
M &= d^2 (1 + T) \frac{16C_1}{\beta^2 \varepsilon}(1 + E) \left( 3 + 2\log \left(C_2 \sqrt{\frac{\pi}{\beta}}\right) \right)^2,\\
        \varepsilon &= \inf_{t\leq T}\{tr(\cov(u_t)) - \spnorm{\cov(u_t)}\} > 0,
    \end{align}
    with $C_1, C_2$ being some positive constants.
    In particular, as the time-integrated score-matching loss goes to zero, the KL-divergence goes to 0:
    \begin{align}
        \KL{u_t}{\ustar_t} \to 0, \quad \text{as} \quad \int_0^t \int_{\R^d} | \score u_\tau - s_\tau |^2_{A \ast u_\tau} u_\tau \rd x\rd \tau \to 0.
    \end{align}
\end{theorem}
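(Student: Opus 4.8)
The plan is to run a Gr\"onwall argument on $\KL{u_t}{\ustar_t}$, starting not from the clean statement of Proposition \ref{lemma: Landau KL bound} but from the sharper inequality buried in its proof. That proof in fact produces, before the dissipative term is discarded,
\begin{align*}
\frac{\rd}{\rd t}\KL{u_t}{\ustar_t}\le{}& 2\int_{\R^d}|\score u-s|^2_{A\ast u}u\rd x-\frac12\int_{\R^d}|\score u-\score\ustar|^2_{A\ast u}u\rd x\\
&+\int_{\R^d}(\score\ustar)^T(\Sigma-\Sigma^*)(\score u-\score\ustar)u\rd x,
\end{align*}
where $\Sigma-\Sigma^*=A\ast(\ustar-u)$ is a constant matrix. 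I would keep the negative middle term as a reservoir into which the cross term can be absorbed.

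First I would dispose of the cross term by Cauchy--Schwarz and Young's inequality, bounding it by $\spnorm{\Sigma-\Sigma^*}\int|\score\ustar|\,|\score u-\score\ustar|u\rd x$ and splitting $|\score\ustar|\,|\score u-\score\ustar|\le\tfrac{\eta}{2}|\score u-\score\ustar|^2+\tfrac1{2\eta}|\score\ustar|^2$. Lemma \ref{lemma: bounds on Maxwell convolution} furnishes the uniform lower bound $A\ast u\ge\eps I$ with $\eps=\inf_{t\le T}\{\operatorname{tr}\cov(u_t)-\spnorm{\cov(u_t)}\}>0$; choosing $\eta=\eps/\spnorm{\Sigma-\Sigma^*}$ then makes the $|\score u-\score\ustar|^2$ piece absorbable into the negative term, leaving only $\frac{\spnorm{\Sigma-\Sigma^*}^2}{2\eps}\int|\score\ustar|^2u\rd x$. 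This strict positivity of $\eps$ is exactly what licenses the absorption and explains the $1/\eps$ in $M$.

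Next I would estimate the two surviving factors. For $\int|\score\ustar|^2u\rd x$, the hypotheses (1)--(2) of the theorem are precisely those of Theorem \ref{thm: score upper bound}, giving $|\score\ustar|\le C(1+|x|+\sqrt t)$; squaring, integrating against $u$, and invoking conservation of mass and energy ($\int|x|^2u\rd x=2E$) yields a bound of order $C_1(1+E)(1+T)$, which supplies the $C_1$, $(1+E)$ and $(1+T)$ in $M$. The remaining, and I expect hardest, step is to show $\spnorm{\Sigma-\Sigma^*}^2$ is controlled by $\KL{u_t}{\ustar_t}$ with a constant depending only on $\beta$ and the subgaussian constant $C_2$. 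Since $\Sigma-\Sigma^*=\int_{\R^d}x\otimes x\,(u-\ustar)\rd x$ is a difference of second moments, this is a weighted Csisz\'ar--Kullback--Pinsker estimate: the subgaussian decay preserved by Theorem \ref{prop: existence, uniqueness, regularity} provides the exponential/high-moment control the weighted Pinsker inequality requires, and it is exactly these moments of a Gaussian-tailed reference that generate the factor $\frac{1}{\beta^2}\big(3+2\log(C_2\sqrt{\pi/\beta})\big)^2$. The restriction $\KL{u_t}{\ustar_t}\le4$ enters here and only here: a weighted total-variation bound naturally outputs $\sqrt{\KL{u_t}{\ustar_t}}+\KL{u_t}{\ustar_t}$, and on the set where $\KL{u_t}{\ustar_t}\le4$ this is comparable to $\sqrt{\KL{u_t}{\ustar_t}}$, so squaring returns the clean linear-in-$\KL$ bound needed.

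Assembling the three estimates converts the differential inequality into $\frac{\rd}{\rd t}\KL{u_t}{\ustar_t}\le 2\int_{\R^d}|\score u-s|^2_{A\ast u}u\rd x+M\,\KL{u_t}{\ustar_t}$, valid on the interval where $\KL{u_t}{\ustar_t}\le4$. Since $u_0=\ustar_0$ forces $\KL{u_0}{\ustar_0}=0$, Gr\"onwall's inequality immediately gives $\KL{u_t}{\ustar_t}\le 2e^{Mt}\int_0^t\int_{\R^d}|\score u_\tau-s_\tau|^2_{A\ast u_\tau}u_\tau\rd x\rd\tau$, and the stated convergence as the time-integrated loss tends to zero follows at once. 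The genuine obstacle is the second-moment/relative-entropy comparison: it is the single place where the anisotropy (nonvanishing $\Sigma-\Sigma^*$, as opposed to the isotropic case of Corollary \ref{corollary: isotropic solutions to Landau equation}) is confronted, where subgaussianity is used in an essential way, and where the smallness condition $\KL{u_t}{\ustar_t}\le4$ becomes unavoidable.
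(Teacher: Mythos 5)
Your proposal is correct and follows the paper's own proof essentially step for step: you start from the sharper inequality inside the proof of Proposition \ref{lemma: Landau KL bound}, absorb the cross term into the dissipative term via the uniform lower bound $A\ast u \geq \eps I$ from Lemma \ref{lemma: bounds on Maxwell convolution} (your $\eta$-parameterized Young inequality with $\eta = \eps/\spnorm{\Sigma-\Sigma^*}$ reproduces exactly the paper's choice), bound $\int_{\R^d} |\score \ustar|^2 u \rd x$ by Theorem \ref{thm: score upper bound} together with conservation of energy, control $\spnorm{\Sigma-\Sigma^*}^2 \leq d^2 \left( \int_{\R^d} |x|^2 |u-\ustar| \rd x \right)^2$ through the weighted CKP inequality of Bolley--Villani with $\phi = \beta |x|^2$ and the propagated subgaussian bound, and close with Gr\"onwall using $\KL{u_0}{\ustar_0} = 0$. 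Your identification of where the restriction $\KL{u_t}{\ustar_t} \leq 4$ enters --- reducing $\sqrt{\KL{u_t}{\ustar_t}} + \tfrac{1}{2}\KL{u_t}{\ustar_t}$ to a multiple of $\sqrt{\KL{u_t}{\ustar_t}}$ so that squaring yields a bound linear in $\KL{u_t}{\ustar_t}$ --- also matches the paper exactly.
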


\begin{proof}
We omit the subscript $t$ for convenience of notation.
Following the proof of Proposition \ref{lemma: Landau KL bound}, there remains to control the term
\begin{align}
 - \frac{1}{2}\int_{\R^d} \left| \score u - \score \ustar\right|^2_{A \ast u} u \rd x + \int_{\R^d} (\score \ustar)^T (\Sigma - \Sigma^*) \left( \score u - \score \ustar\right)u \rd x.
\end{align}
By Lemma \ref{lemma: bounds on Maxwell convolution},
\begin{align}
    A\ast u - \varepsilon I \geq 0.
\end{align}
Using Young's inequality, we get
\begin{align}
    &-\frac{1}{2}\int_{\R^d} \left|\score \frac{u}{\ustar}\right|^2_{A \ast u} u \rd x + \int_{\R^d} (\score \ustar)^T (\Sigma - \Sigma^*) \left(\score \frac{u}{\ustar}\right)u \rd x \\
    \leq& -\frac{1}{2}\int_{\R^d} \left|\score \frac{u}{\ustar}\right|^2_{A \ast u} u \rd x + \frac{\varepsilon}{2}\int_{\R^d} \left|\score \frac{u}{\ustar}\right|^2 u \rd x + \frac{1}{2\varepsilon} \int_{\R^d} \left| (\Sigma - \Sigma^*) \score \ustar \right|^2 u \rd x \\
    \leq& \frac{1}{2\varepsilon} \spnorm{\Sigma - \Sigma^*}^2 \int_{\R^d} \left| \score \ustar \right|^2 u \rd x \\
    \leq& \frac{4C_1}{\varepsilon}(1 + t)(1 + E)\spnorm{\Sigma - \Sigma^*}^2. \label{eqn: term 1 of main proof}
\end{align}
The last inequality (and constant $C_1>0$) follows from Theorem \ref{thm: score upper bound} and conservation of energy.

We rewrite the difference of second moments in the form of a weighted $L^1$ norm.
\begin{align}
    \spnorm{\Sigma - \Sigma^*}^2 
    \leq \sum_{i,j=1}^d \left( \Sigma_{i,j} - \Sigma^*_{i,j} \right)^2 = \sum_{i,j=1}^d \left( \int_{\R^d} x_i x_j ( u -\ustar) \rd x \right)^2 \leq d^2 \left(\int_{\R^d} |x|^2 |u -\ustar| \rd x \right)^2. \label{eqn: term 2 of main proof}
\end{align}
We are now in a position to use the weighted CKP inequality from \cite{bolley2005weighted}
\begin{align}
    \int_{\R^d} \phi(x)|u(x) - \ustar(x)|\rd x \leq \left( 3 + 2\log \int_{\R^d} e^{2\phi(x)}\ustar(x)\rd x \right)\left( \KL{u}{\ustar}^{1/2} + \frac{1}{2}\KL{u}{\ustar} \right).
\end{align}
with $\phi(x) = \beta |x|^2$.
By Theorem \ref{prop: existence, uniqueness, regularity}, there exists a constant $C_2$ such that
$$\ustar(x) \leq C_2 e^{-3\beta |x|^2}.$$
Thus,
\begin{align}
    \int_{\R^d} e^{2\beta|x|^2}\ustar(x)\rd x \leq C_2 \int_{\R^d} e^{-\beta |x|^2}\rd x = C_2 \sqrt{\frac{\pi}{\beta}}. 
\end{align}
We have
\begin{align}
    \beta \int_{\R^d} |x|^2 |u - \ustar| \rd x 
    &\leq \left( 3 + 2\log \int_{\R^d} e^{2\beta |x|^2}\ustar(x)\rd x \right)\left( \KL{u}{\ustar}^{1/2} + \frac{1}{2}\KL{u}{\ustar} \right) \\
    &\leq \beta C_3 \KL{u}{\ustar}^{1/2}, \quad \text{for all $t$ such that $\KL{u}{\ustar}\leq 4$,} \label{eqn: term 3 of main proof}\\
    C_3 &= \frac{2}{\beta}\left( 3 + 2\log \left(C_2 \sqrt{\frac{\pi}{\beta}}\right) \right).
\end{align}

Combining estimates \eqref{eqn: term 1 of main proof}, \eqref{eqn: term 2 of main proof} and \eqref{eqn: term 3 of main proof}, we get the differential inequality that holds for $t \leq T$ small enough that $\KL{u}{\ustar} \leq 4$
\begin{align}
    \frac{\rd}{\rd{t}}\KL{u}{\ustar} 
    &\leq L(t) + \frac{4C_1}{\varepsilon}(1 + E) d^2 C_3^2 (1 + T) \KL{u}{\ustar} \\
    &= L(t) + M \KL{u}{\ustar},
\end{align}
with
\begin{align}
L(t) = 2\int_{\R^d} |s - \score u|^2_{A \ast u} u \rd x,\quad  M = \frac{4C_1}{\varepsilon}(1 + E) d^2 C_3^2 (1 + T).
\end{align}
By generalized Grönwall's inequality we obtain
\begin{align}
    \KL{u}{\ustar} \leq e^{Mt} \int_0^t L(\tau) \rd{\tau}.
\end{align}
\end{proof}

\section{Numerical experiments}
We compare the proposed method with the traditional blob method \cite{carrillo2020landau} on several examples with Maxwellian and Coulomb collision kernels. All simulations and plots were done in Julia programming language with the code available at \href{https://github.com/Vilin97/GradientFlows.jl}{GradientFlows.jl}. The code is well-tested with both unit tests and integration tests running on GitHub Actions. 

All numerical experiments were performed on the Doppio cluster of the Applied Mathematics Department at the University of Washington -- a Silicon Mechanics Rackform R2504.V6 with two 20-core Intel Xeon E5-2698 v4 processors at 2.20GHz, 512 GB of RAM, and two TITAN X (Pascal) GPUs with 12 GB of RAM each.

\subsection{Hyperparameter choice}\label{subsection: hyperparameters}
In the blob method, as the kernel bandwidth $\varepsilon$ goes to either zero or infinity, the velocity of the particles in the blob method goes to zero, which indicates that there is a ``sweet spot" value of $\varepsilon$. The authors of \cite{carrillo2020landau} used the mesh size to set the value of $\varepsilon$. Since we do random initialization instead of grid initialization, we follow the statistical literature \cite{chen2017tutorial} on kernel density estimation and choose $\varepsilon$ to minimize MISE from the gradient of the true solution. We use Silverman's plug-in method:
\begin{align}
    \varepsilon = n^{-1/(d+6)}\left(\sigma_1\cdot...\cdot\sigma_d\right)^{1/d},
\end{align}
where $\sigma_i$ are the eigenvalues of the covariance matrix $\Sigma$ of the sample. We suspect that while this choice is asymptotically optimal for estimating $\nabla u$, it might not be optimal for estimating $\score u = \frac{\nabla u}{u}$.

In the SBTM, there are more hyperparameters, the most important being 
\begin{enumerate}
    \item the NN architecture: number and sizes of layers
    \item learning rate $\eta$
    \item optimizer
    \item activation function
    \item number $K$ of GD steps at each time step
    \item denoising constant $\alpha$
\end{enumerate}
We tuned all of the above and chose $\eta=4\cdot10^{-4}$, the Adam optimizer, the \texttt{softsign} activation function, $K=25$, and $\alpha=0.4$. The choice of $\alpha$ presents a bias-variance trade-off: higher values of $\alpha$ introduce bias but reduce the variance of the approximation of \eqref{eqn: implicit score matching loss}. We use a neural network with two hidden layers, each with 100 neurons, for the BKW example and one hidden layers with 100 neurons for other examples. We use a larger neural network for the BKW example because initial score is nearly singular at 0.

We found that different learning rates work best on different examples, with the most common issue being underfitting of the score in the first few iterations. Choosing the wrong learning rate may lead to underfitting (top right subfigure of Figure \ref{fig: score matching loss underfitting}), and it is hard to select the correct learning rate without having access to the true solution, because the true score matching loss can only be plotted if the true solution is known. Instead, an adaptive strategy may be used: at step 6 of the SBTM algorithm, instead of doing $K$ GD steps, perform GD steps on the denoising loss $L(s)$ in \eqref{eqn: loss} until the implicit loss \eqref{eqn: implicit score matching loss} stops decaying.

The adaptive training strategy sidesteps performing gradient descent on the true implicit loss, which is slow, but uses the true implicit score to detect convergence of the neural network. Figure \ref{fig: score matching loss underfitting} demonstrates how a poor choice of the learning rate $\eta$ and the number of GD steps $K$ can lead to severe underfitting. The black line is the true score matching loss \eqref{eqn: original score matching loss}, the orange line is the true implicit loss \eqref{eqn: implicit score matching loss}, and the blue line is the denoising loss \eqref{eqn: loss}. Each time the particles move, the loss increases, and each step of GD the loss decreases. The adaptive strategy proposed above does well for a wide range of learning rates (compare the two bottom plots), while the non-adaptive strategy of taking $K$ GD steps is very sensitive to the learning rate. When the solution gets closer to the steady state, fewer GD steps are needed to keep a good approximation of the score, which may lead to a significant speed-up if the adaptive strategy is used -- compare the total number of epochs on the horizontal axis of the two subfigures on the left. While the adaptive strategy is promising, we defer its exploration to future work.

For density estimation we use the kernel bandwidth given by the Scott's rule of thumb \cite{scott2015multivariate}:
\begin{align}
    H^* = n^{-1/(d+4)}\Sigma,
\end{align}
where $\Sigma$ is the covariance of the sample. It is asymptotically optimal for minimizing MISE if the underlying distribution is Gaussian. See \cite{chen2017tutorial} for a comprehensive review of multivariate kernel density estimation.
\begin{figure}[H]
    \centering
    \includegraphics[width=0.49\textwidth]{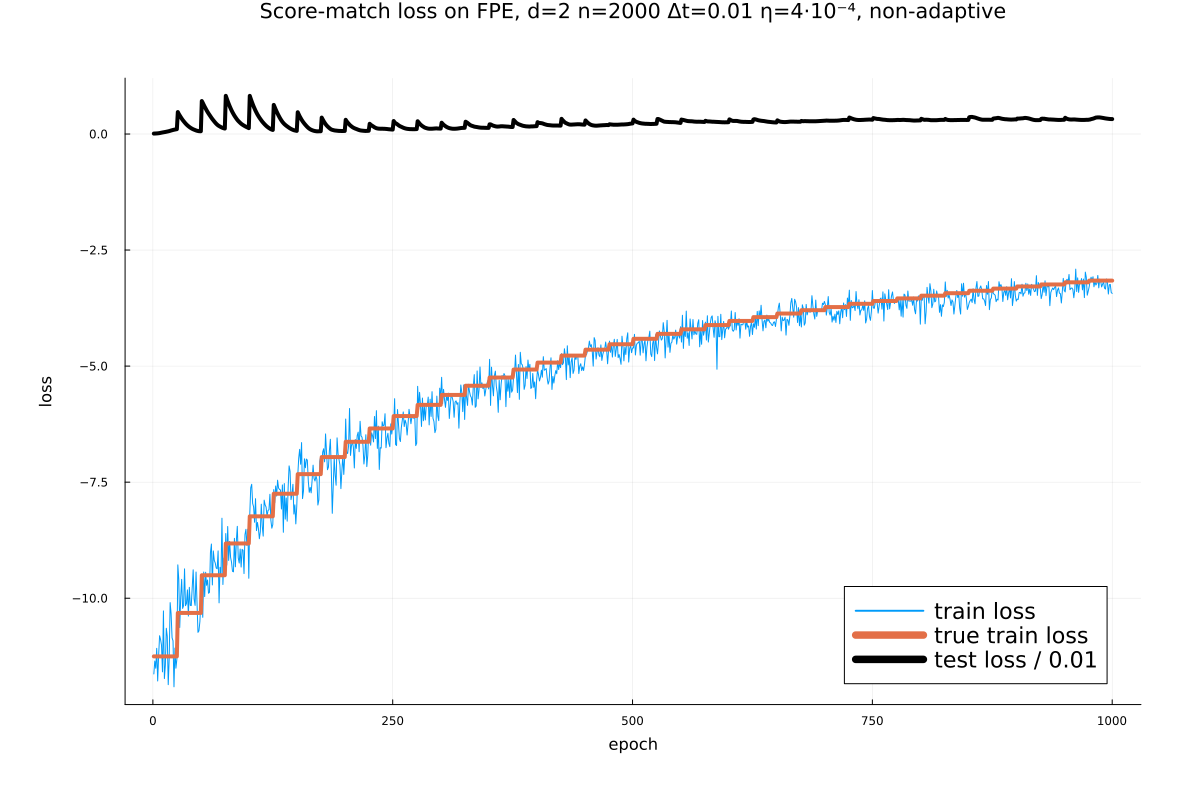}
    \includegraphics[width=0.49\textwidth]{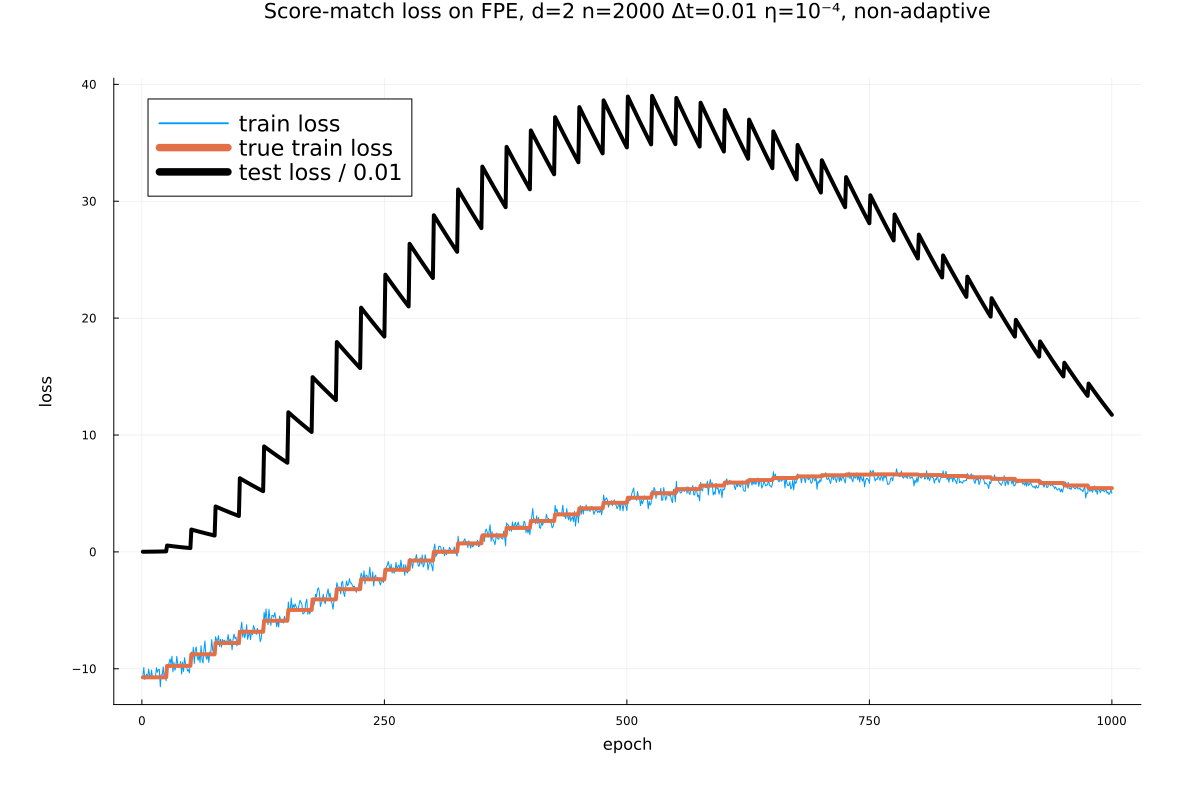}
    \includegraphics[width=0.49\textwidth]{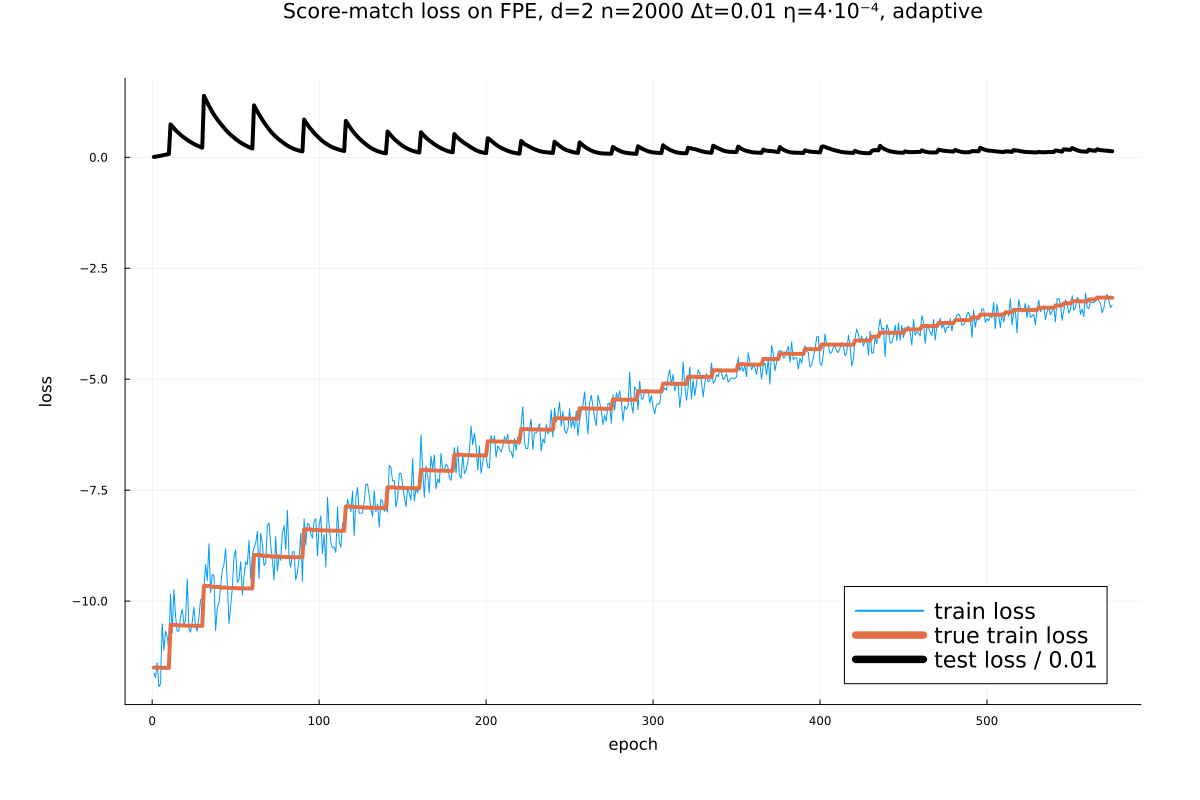}
    \includegraphics[width=0.49\textwidth]{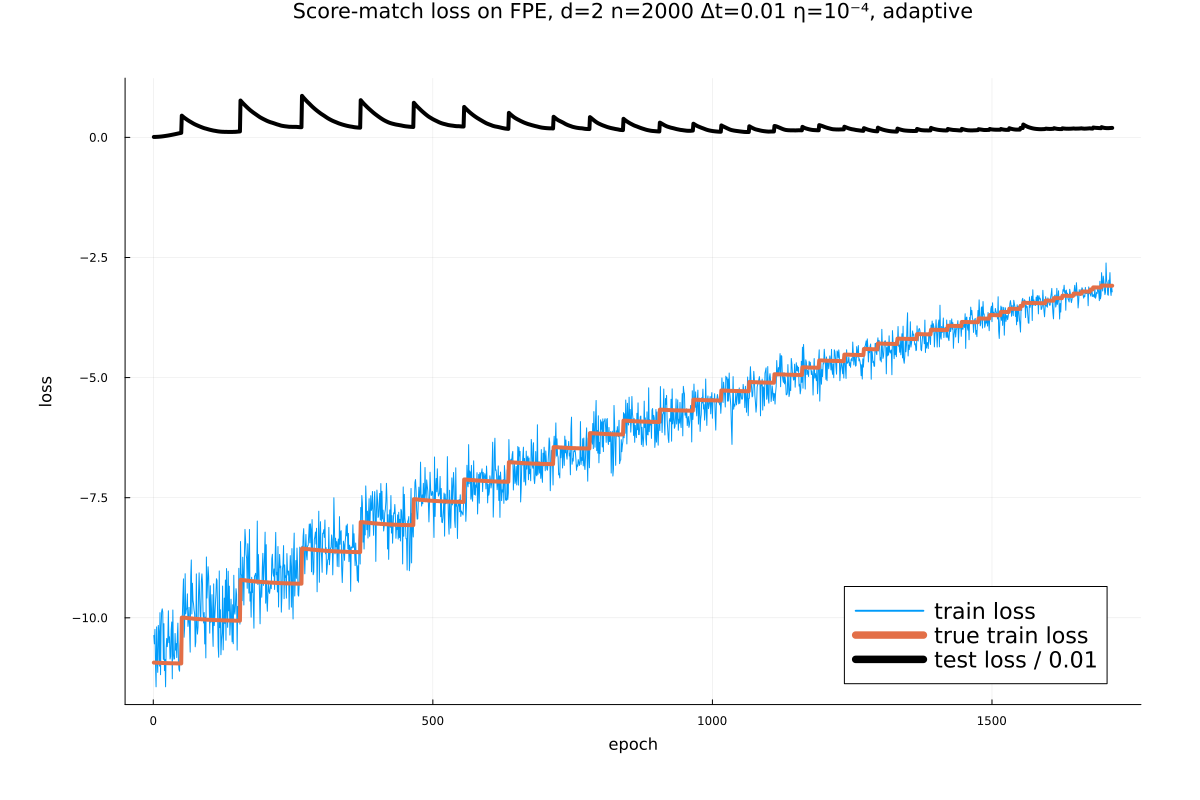}
    \caption{Effect of underfitting, optimal $\eta=4\cdot10^{-4}$ (left) vs low $\eta=10^{-4}$ (right) and non-adaptive training strategy (top) vs adaptive training strategy (bottom).}
    \label{fig: score matching loss underfitting}
\end{figure}

\subsection{Landau equation with Maxwellian kernel}

\begin{example}[BKW solution in dimension $d=3$]\label{example: Landau isotropic BKW solution}
   The BKW solution is an isotropic analytic solution to the Landau equation with Maxwellian kernel. In general dimension $d$, it is given by
        \begin{align}
            A_{i,j}(z) = B(|z|^2\delta_{i,j} - z_i z_j), \quad \ustar_t(x) = \left( 2\pi K \right)^{-d/2}\exp\left( -\frac{|x|^2}{2K} \right)\left(P + Q|x|^2\right),\\
            P = \frac{(d+2)K-d}{2K}, \quad Q = \frac{1-K}{2K^2}, \quad K = 1 - \exp\left( -2B(d-1)t \right), \quad B = 1/24.
        \end{align}
    It is derived in Appendix A of \cite{carrillo2020landau}. We use $t_0=5.5$ to be slightly larger than
    \begin{align}
    t_0^* 
    = \frac{\log (d/2 + 1)}{2B(d-1)}= 5.497\quad \text{for } d=3,\ B=1/24,
    \end{align}
    so that $P(t_0)$ is slightly positive. At $t=t_0^*$ the score is singular. We take $\Delta t = 0.01$ and $t_{\text{end}} = 9.5$.
\end{example}
\begin{remark}\label{remark: isotropic initial data matches second moments}
    Since the BKW solution stays isotropic, second moments are automatically matched, up to the sampling error, on the order of $\frac{1}{\sqrt{n}}$. This is because the trace of the second moment matrix $\Sigma$ is $2E$, and the energy $E$ is conserved. Thus, we do not expect to see a big discrepancy between blob and SBTM in terms of matching the second moments. The simplicity of corollary \ref{corollary: isotorpic KL bound} is another indication of the degeneracy of isotropic initial data for the Landau equation.
\end{remark}
\begin{figure}[htp!]
    \centering
\includegraphics[width=0.49\textwidth]{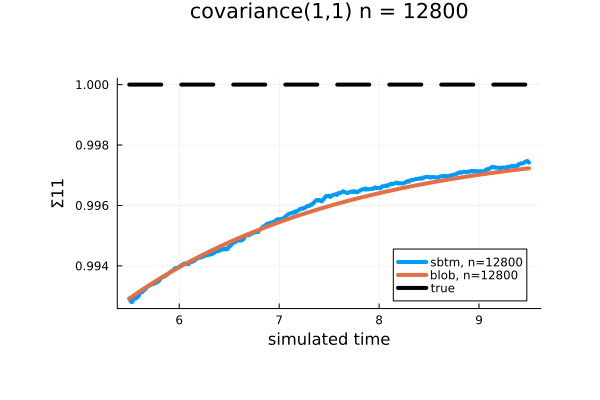}
    \includegraphics[width=0.49\textwidth]{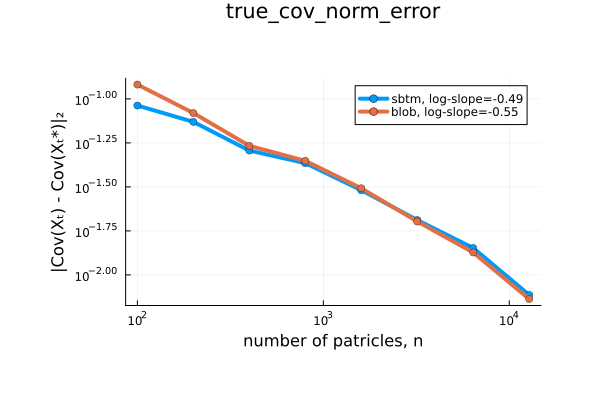}
    \includegraphics[width=0.49\textwidth]{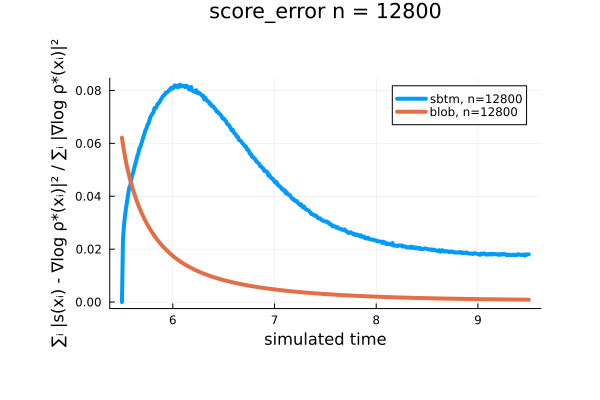}
    \includegraphics[width=0.49\textwidth]{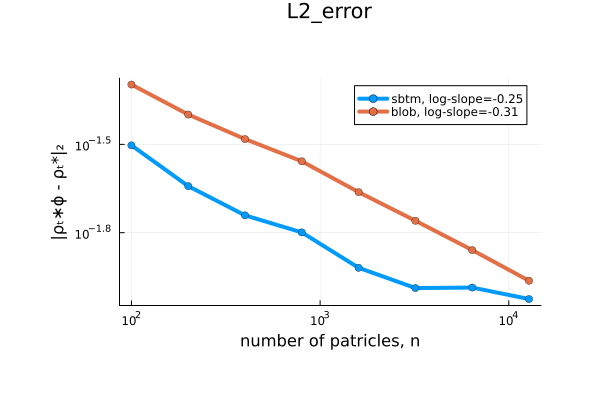}
    \includegraphics[width=0.49\textwidth]{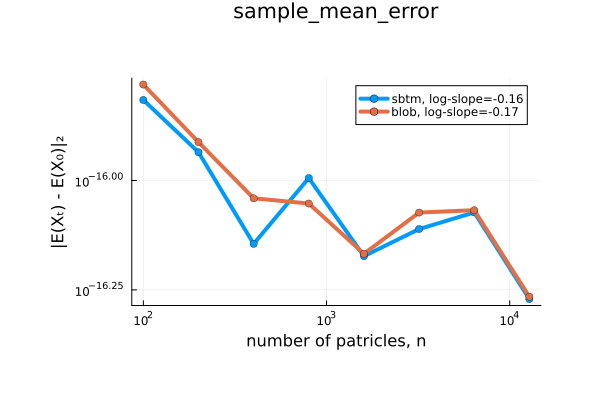}
    \includegraphics[width=0.49\textwidth]{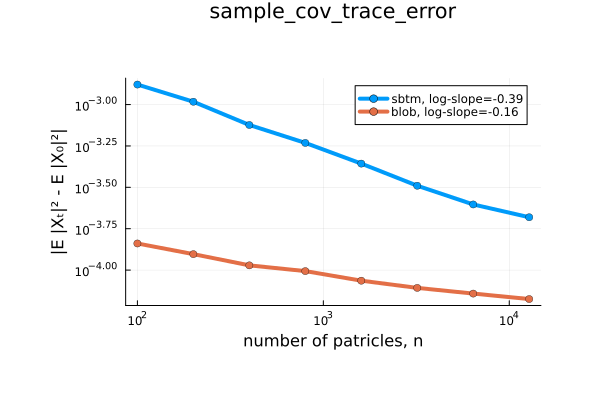}
    \caption{(An isotropic) BKW solution to the Landau equation with Maxwellian kernel in dimension $d=3$. Both the blob method and SBTM do a decent job of approximating the solution.}
    \label{fig: bkw d=3}
\end{figure}

Figure \ref{fig: bkw d=3} shows several metrics that quantify how accurate the numerical solution is. The first plot shows the time evolution of the $(1,1)$ entry of the covariance matrix $\Sigma$ of the numerical solution and of the true solution. The second plot shows the Frobenius norm of the difference between the true second moment matrix $\Sigma^*(t_\text{{end}})$ and the second moment matrix $\Sigma(t_\text{{end}})$ of the numerical solution
\begin{align}
    \sum_{i,j=1}^d |\Sigma_{i,j} - \Sigma^*_{i,j}|^2, \quad \Sigma_{i,j} = \frac{1}{n}\sum_{k} (X_k)_i (X_k)_j,
\end{align}
for particles $\{X_k\}_{k=1}^n$.
The third metric is the normalized true score matching error
\begin{align}
    \frac{\sum_i | \score \ustar_t(X_i) - s(X_i) |^2}{\sum_i | \score \ustar_t(X_i)|^2}.
\end{align}
The last metric is the L2 error of the reconstructed density against the true density
\begin{align}
    \left(\int_{\R^d} |\ustar_t(x) - \frac{1}{n}\sum_j \phi_h(x - X_j)|^2 \rd x\right)^{1/2}.
\end{align}
The $L2$ error relies on the reconstructed density, as opposed to just the particle solution, so this plot may look very different depending on the choice of bandwidth. We also plot the conserved moments. The fifth plot in Figure \ref{fig: bkw d=3} shows the norm of the difference between the sample mean at time $t_0$ and $t_{\text{end}}$
\begin{align}
    \left|\frac{1}{n}\sum_i X_i(t_{\text{end}}) - \frac{1}{n}\sum_i X_i(t_0)\right|.
\end{align}
As predicted by Proposition \ref{prop: conservations of particle solution}, this quantity is identically zero, up to floating point error.
The sixth plot shows the difference between the sample kinetic energy at time $t_0$ and $t_{\text{end}}$
\begin{align}
    \left|\frac{1}{n}\sum_i |X_i(t_{\text{end}})|^2 - \frac{1}{n}\sum_i |X_i(t_0)|^2\right|.
\end{align}
This quantity is not identically zero because of time discretization. But it is on the order of $\Delta t = 0.01$.

As described above, the score matching error is very sensitive to the learning rate. It is likely that the learning rate of $\eta=4\cdot10^{-4}$ is too low for this example but we refrained from optimizing the hyperparameters for each individual example for the sake of a fair comparison.

Per Remark \ref{remark: isotropic initial data matches second moments}, in order to meaningfully assess the quality of the proposed numerical method in terms of matching the second moments, we must use non-isotropic initial data. Without loss of generality, the covariance of the initial data can be taken to be diagonal. Indeed, since the covariance is positive semi-definite, there exists an orthonormal choice of basis in which covariance is diagonal. Furthermore, since the equilibrium distribution of the Landau equation is normal, it is natural to take the initial distribution to be normal with a diagonal covariance. This leads to the following example.
\begin{example}[Anisotropic initial condition in dimension $d=3,10$]\label{example: Landau anisotropic}
    We take the initial distribution to be normal with covariance
    \begin{align}
        \Sigma_{i,j} =\delta_{i,j}\sigma_i,\quad  \sigma_1 = 1.8, \ \sigma_2 = 0.2, \ \sigma_i = 1, \ i=3,\dots,d,
    \end{align}
    so that the trace is equal to $d$, same as in the BKW example. The evolution of the covariance matrix is
    \begin{align}
        &\Sigma^*_{i,j}(t) = \Sigma^*_{i,j}(\infty) - (\Sigma^*_{i,j}(\infty) - \Sigma^*_{i,j}(0))e^{-4dt},\\
       & \Sigma^*_{i,j}(\infty) = \frac{2E}{d}\delta_{i,j}, \quad 2E = tr(\Sigma(0)).
    \end{align} 
    We use $\Delta t = 0.01$, and $t_{\text{end}} = 4$.
\end{example}
Since the true solution is not known, we cannot plot the L2 error or the score error. However, there exists a closed form solution for the second moments of the true solution, which we can compare to the second moments of the numerical solution. Figure \ref{fig: anisotropic landau d=3} shows the covariance trajectories, with the dashed line denoting the analytic solution, and the Frobenius norm of the difference of second moments. The fourth subfigure is the estimated entropy production/decay rate
\begin{align}
    \frac{1}{n}\sum_{i=1}^n s_t(X_i)\cdot v_t(X_i).
\end{align} 
The covariance and entropy rate plots use $n=100$ and $n=12800$ particles. SBTM matches covariance well even with $n=100$ particles, while the blob method needs a lot more particles to achieve the same accuracy. Using the entropy decay trajectories with $n=12800$ as a reference, SBTM with $n=100$ matches it well, unlike the blob method.
\begin{figure}[htp!]
    \centering
  \includegraphics[width=0.49\textwidth]{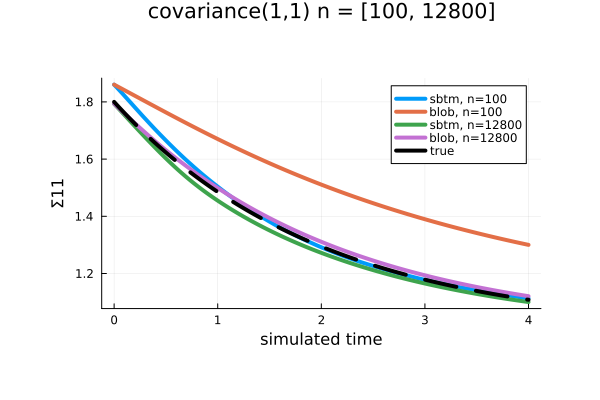}
    \includegraphics[width=0.49\textwidth]{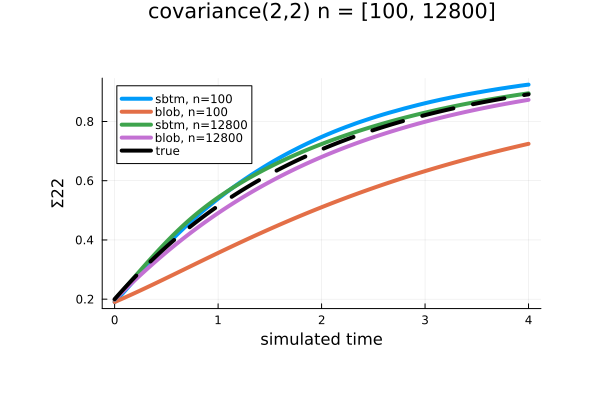}
    \includegraphics[width=0.49\textwidth]{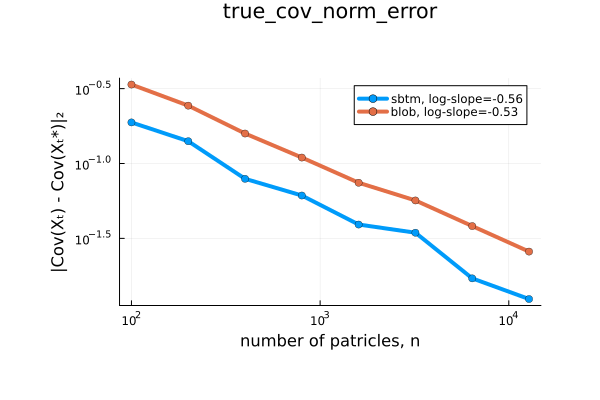}
    \includegraphics[width=0.49\textwidth]{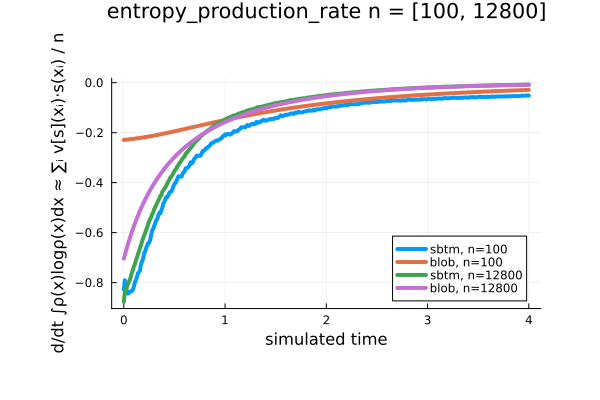}
      \caption{(An anisotropic) solution to the Landau equation with Maxwellian kernel in dimension $d=3$. SBTM can match the covariance well even with $n=100$ particles, while the blob methods needs a lot more particles to achieve the same accuracy.}
    \label{fig: anisotropic landau d=3}
\end{figure}

Although the (physically relevant) Landau operator resides in three-dimensional velocity space, when it is coupled with the full Vlasov operator the ambient space is six dimension. This high dimensional problem causes a lot of numerical difficulties for the traditional particle method. In the recent work \cite{BCH24}, the blob method is generalized to the Vlasov-Landau equation. Due to the complexity of the method, the highest dimension treated there is 1D2V (one dimension in the physical space and two dimension in the velocity space). For this reason, we also compare the blob method and SBTM in dimension $d=10$. The purpose is to examine their performance for higher dimensional problem. It is evident from Figure \ref{fig: anisotropic landau d=10} that even $n=25600$ particles are not enough for the blob method to match the true covariance trajectory, whereas SBTM gives a very good prediction.
\begin{figure}[htp!]
    \centering
    \includegraphics[width=0.49\textwidth]{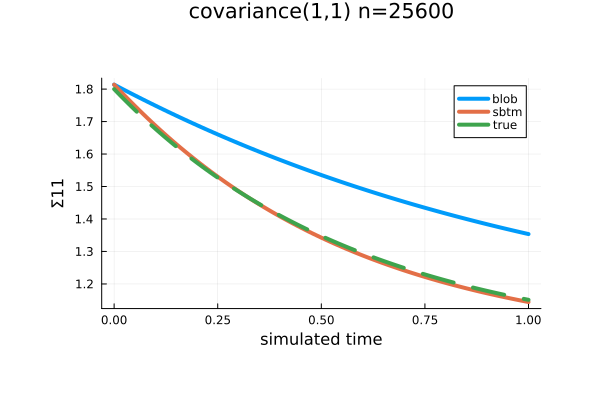}
    \includegraphics[width=0.49\textwidth]{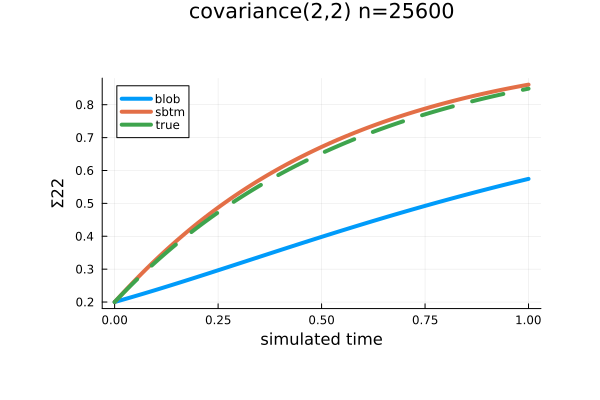}
    \includegraphics[width=0.49\textwidth]{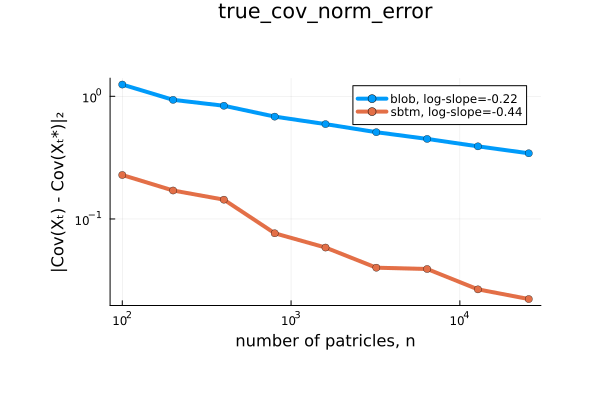}
    \caption{(An anisotropic) solution to the Landau equation with Maxwellian kernel in dimension $d=10$. Even $n=25600$ particles are not enough for the blob method to match the true covariance trajectory, whereas SBTM gives a very good prediction.}
    \label{fig: anisotropic landau d=10}
\end{figure}

\subsection{Landau equation with Coulomb kernel}
\begin{example}[Anisotropic initial condition in dimension $d=3$]
    We take the initial distribution to be normal with covariance
    \begin{equation}
        \Sigma_{i,j} = \delta_{i,j}\sigma_i, \quad \sigma_1 = 1.8, \ \sigma_2 = 0.2, \ \sigma_3 = 1,
    \end{equation}
    so that the trace is 3, and use the Coulomb collision kernel
    $A(z) = \frac{1}{|z|^3}(|z|^2 I_d - z\otimes z$). We choose $\Delta t = 1$, and $t_{\text{end}} = 300$. We do not observe any instabilities with such a large time step.
\end{example}
\begin{figure}[htp!]
    \centering
 \includegraphics[width=0.49\textwidth]{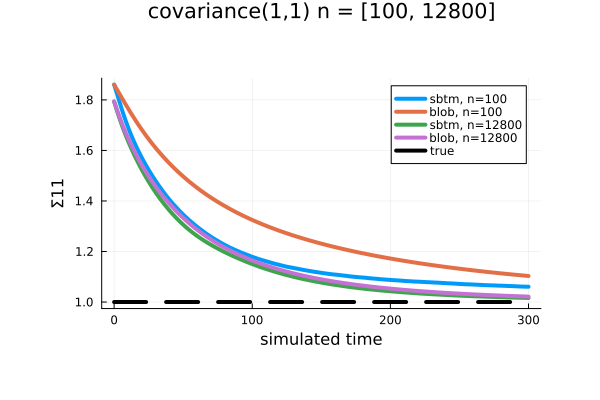}
    \includegraphics[width=0.49\textwidth]{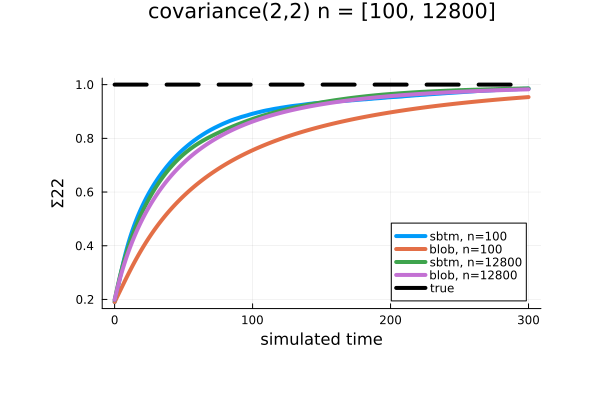}
    \includegraphics[width=0.49\textwidth]{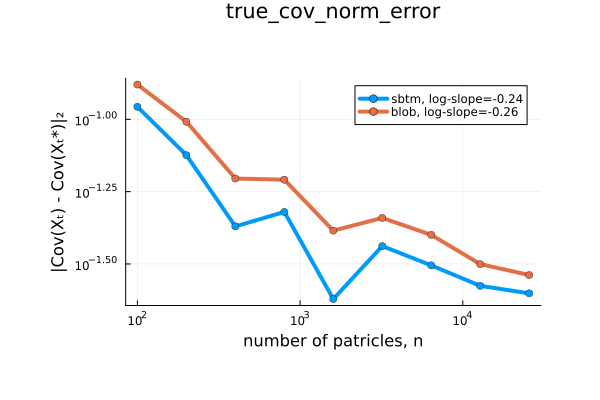}
    \includegraphics[width=0.49\textwidth]{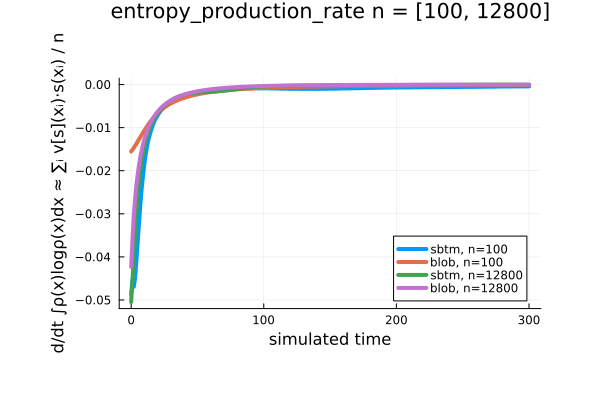}
       \caption{(An anisotropic) solution to the Landau equation with Coulomb kernel in dimension $d=3$. SBTM achieves better accuracy with $n=100$ particles in terms of matching covariance and entropy decay rate of the simulations with $n=12800$ particles.}
    \label{fig: coulomb landau d=3}
\end{figure}
While the true covariance and entropy decay rate trajectories are unknown as well as all other statistics of the true solution, if the trajectories of the SBTM and the blob method with large $n$ coincide, that trajectory is likely the correct one and serves as the reference for the trajectories with low $n$. In Figure \ref{fig: coulomb landau d=3} the first two plots are the diagonal covariance entries $\Sigma_{1,1}$ and $\Sigma_{2,2}$, with the black dashed line being the covariance of the steady state. The third plot is the Frobenius norm of the difference between the steady-state second moment matrix $I_3$ and the second moment matrix of the particle solution at $t=t_{\text{end}}$. The fourth plot is the estimated entropy decay rate
\begin{align}
    \frac{1}{n}\sum_{i=1}^n s_t(X_i)\cdot v_t(X_i).
\end{align}
The horizontal axis of first covariance and entropy rate plots is time. SBTM achieves better accuracy with $n=100$ particles in terms of matching covariance and entropy decay rate of the simulations with $n=12800$ particles.

\subsection{Runtime}
SBTM has linear in the number of particles computational complexity of the score estimation routine, which is the computational bottleneck. Additionally, it is easy to use the GPU to speed up the neural network training using of-the-shelf libraries, such as \texttt{Flux.jl} in \texttt{Julia}. We did this for the heat equation, where we observed a roughly $O(n^{0.6})$ scaling of the computational time on the GPU for the SBTM method.

The blob method has quadratic asymptotic complexity. It does not benefit as much from parallelization as SBTM, and has a quadratic computational time scaling.

Figure \ref{fig: timings} shows the timings of the two methods as a function of the number of particles $n$, on the CPU and GPU. The GPU timing is for the heat equation but the score estimation routine is the bottleneck for all the equations we have tested, so most of the computational time is score estimation.
\begin{figure}[htp!]
    \centering
    \includegraphics[trim={5mm 5mm 0 10mm}, clip, width=0.49\textwidth]{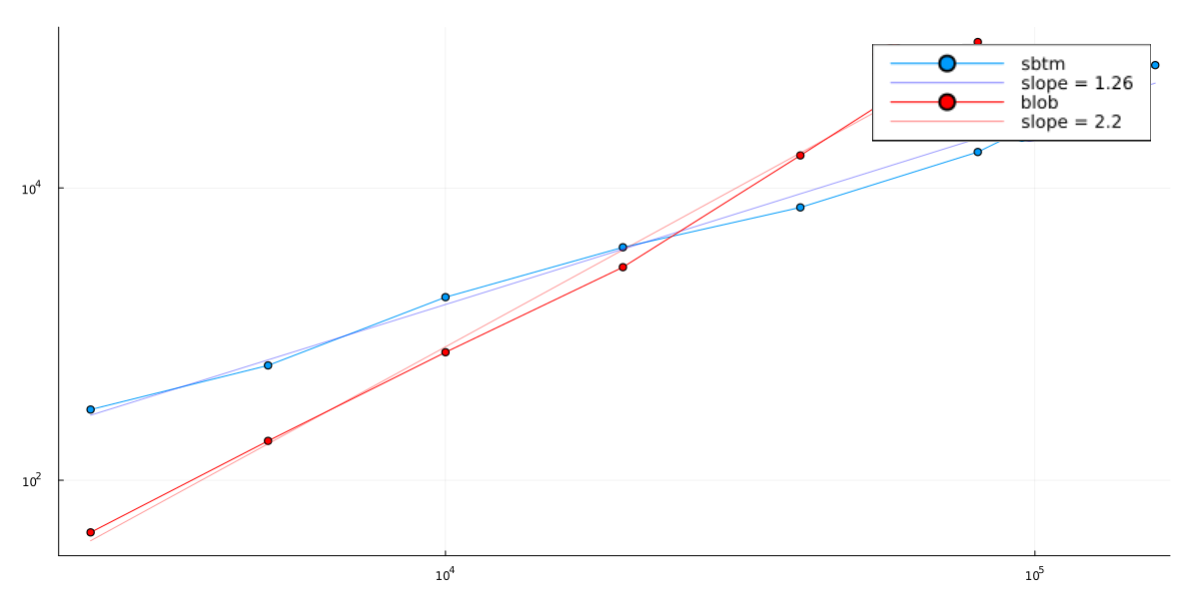}
    \includegraphics[trim={5mm 5mm 0 10mm}, clip, width=0.49\textwidth]{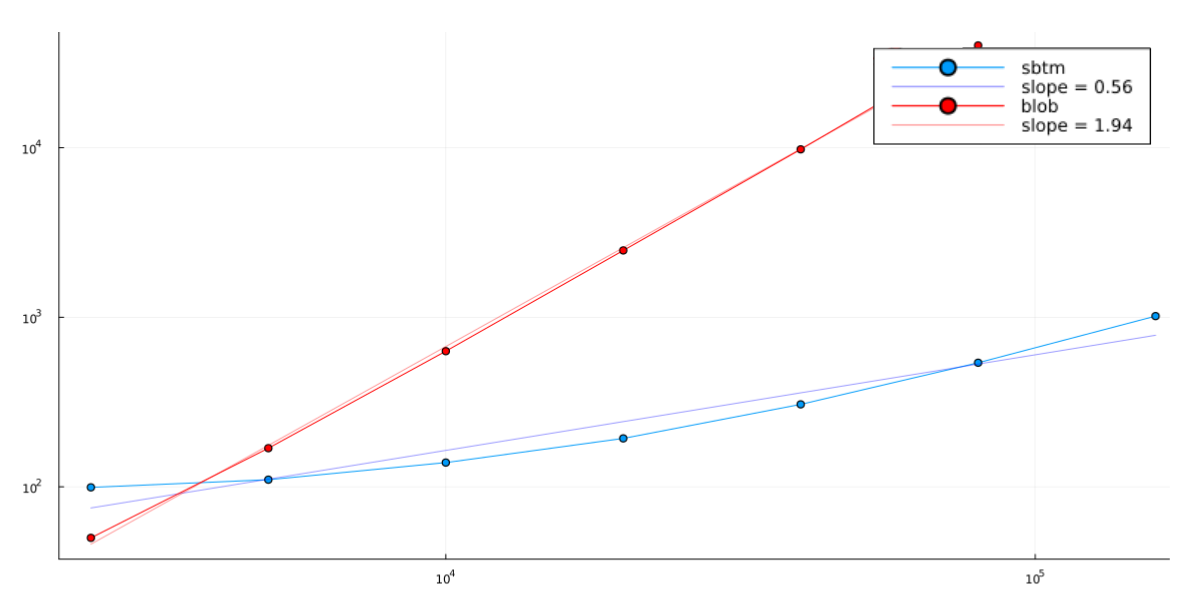}
     \caption{Computational time in seconds (vertical) vs number of particles $n$ (horizontal), on the CPU with 10 threads (left) and GPU (right) log-log scale.}
    \label{fig: timings}
\end{figure}

\section{Conclusion}
In this paper, we proposed a particle method for the Fokker-Planck-Landau equation using the score-based-transport-modeling (SBTM). Compared with the traditional blob method \cite{carrillo2020landau}, where only one parameter $\varepsilon$ (regularization parameter for the entropy) is used to approximate the score, SBTM uses the neural network to approximate the score. We also obtained a theoretical guarantee of the method by showing that matching the score of the approximate solution is enough to recover the true solution to the Landau equation with Maxwellian molecules.
Through a series of numerical examples, we demonstrated that SBTM has the following advantages over the blob method:
\begin{enumerate}
\item offers better accuracy especially for anisotropic solutions;
    \item offers better accuracy in the sparse particles regime, e.g., high dimension or small number of particles in moderate dimension;
    \item linear in the number of particles runtime scaling for score approximation;
    \item easy parallelization on the GPU using off-the-shelf ML libraries, further reducing the runtime.
\end{enumerate}
The main disadvantage of the SBTM method is the large number of hyperparameters, including neural network architecture, learning rate, and the number of gradient descent steps. The adaptive training strategy outlined in section \ref{subsection: hyperparameters} can be more forgiving to a bad choice of hyperparameters.

Regarding the ongoing and future work: on the theoretical side, we are considering the extension of Theorem \ref{theorem: Landau KL bound} to other values of $\gamma$, with the Coulomb kernel being of particular interest. On the numerical side, based on our preliminary analysis, making the number of gradient descent steps adaptive can greatly improve accuracy and reduce runtime. Lastly, SBTM can be used as a kernel to solve the full Vlasov-Landau equation \eqref{eqn: Vlasov-Landau} in a similar vein as in \cite{BCH24}. Deriving any theoretical guarantees for this method and performing numerical experiments is an important direction for future work.

\section{Appendix: The continuity equation and its particle solution}
    In this review technical assumptions are omitted but can be found in \cite{santambrogio2015optimal}. Let $v_t(x) \in \R^d$ be a time-dependent vector field. Equation 
\begin{equation}\label{eqn: continuity equation}
    \dudt + \g \cdot (v_t u_t) = 0
\end{equation}
is called a \textit{continuity equation}. A weak solution of the continuity equation can be defined the same way as for any PDE but there is a simpler alternative definition. See Proposition 4.2 in \cite{santambrogio2015optimal} for a proof that the definition below is equivalent to the usual definition.

\begin{definition}[Weak solution of continuity equation]
    A weak solution of the continuity equation \eqref{eqn: continuity equation} is a curve of probability distributions $u_t$ indexed by $t\geq 0$ such that for all test functions $\phi \in C_c^\infty(\R^d)$ (compactly supported smooth functions), the following holds:
    \begin{align}\label{eqn: weak solution}
        \frac{\rd}{\rd{t}}\int_{\R^d} \phi(x) \rd u_t(x) = \int_{\R^d} \g \phi(x) \cdot v_t(x) \rd u_t(x).
    \end{align}
\end{definition}

The next proposition shows how to obtain a weak solution by solving a system of ODEs.
\begin{proposition}
    The particle solution given by the empirical measure as below
    \begin{align}
        u_t(x) = \frac{1}{n}\sum_{i=1}^n \delta_{X_i(t)}(x), \quad \frac{\rd X_i}{\rd{t}} = v_t(X_i)
    \end{align}
    is a weak solution of the continuity equation \eqref{eqn: continuity equation}.
\end{proposition}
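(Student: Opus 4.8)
The plan is to verify the weak formulation \eqref{eqn: weak solution} directly, by substituting the empirical measure on both sides and using the defining ODEs. First I would evaluate the left-hand side: integrating a test function $\phi \in C_c^\infty(\R^d)$ against the empirical measure $u_t = \frac{1}{n}\sum_{i=1}^n \delta_{X_i(t)}$ collapses to a finite sum,
\[
\int_{\R^d} \phi(x)\rd u_t(x) = \frac{1}{n}\sum_{i=1}^n \phi(X_i(t)).
\]
Because this is a finite sum of differentiable functions of $t$ (the trajectories $X_i(t)$ being differentiable by virtue of the ODE together with the regularity of $v_t$), I can differentiate it term by term.

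The second step is to apply the chain rule and insert the characteristic ODEs. Differentiating each summand gives $\frac{\rd}{\rd t}\phi(X_i(t)) = \g\phi(X_i(t))\cdot \dot X_i(t)$, and substituting $\dot X_i = v_t(X_i)$ yields
\[
\frac{\rd}{\rd t}\int_{\R^d}\phi\rd u_t = \frac{1}{n}\sum_{i=1}^n \g\phi(X_i(t))\cdot v_t(X_i(t)).
\]

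Finally I would recognize the right-hand side as an integral against the same empirical measure, namely $\frac{1}{n}\sum_i \g\phi(X_i)\cdot v_t(X_i) = \int_{\R^d}\g\phi(x)\cdot v_t(x)\rd u_t(x)$, which is precisely the right-hand side of \eqref{eqn: weak solution}. Since $\phi$ was an arbitrary element of $C_c^\infty(\R^d)$, this shows $u_t$ is a weak solution.

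I do not expect a genuine analytical obstacle here: the only points requiring care are ensuring the ODE system admits a well-defined differentiable solution (Cauchy–Lipschitz, using the regularity of $v_t$) so that term-by-term differentiation of the finite sum is legitimate, and that $\phi$ is at least $C^1$ so the chain rule applies — both covered by the standing technical assumptions referenced in the appendix. The finiteness of the particle sum is exactly what makes the interchange of differentiation and ``integration'' immediate, sidestepping the justification of differentiating under the integral sign that would be needed for a general absolutely continuous curve of measures.
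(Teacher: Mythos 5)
Your proposal is correct and follows exactly the paper's own one-line argument: integrate $\phi$ against the empirical measure to get the finite sum $\frac{1}{n}\sum_i \phi(X_i(t))$, differentiate term by term via the chain rule, substitute $\dot X_i = v_t(X_i)$, and recognize the result as $\int_{\R^d}\g\phi\cdot v_t\rd u_t$. Your added remarks on Cauchy--Lipschitz well-posedness and the finiteness of the sum making the differentiation immediate are sensible padding the paper leaves implicit, but the substance of the argument is identical.
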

\begin{proof}
    Let $\phi(x)$ be a test function. Then
    \begin{align}
        \frac{\rd}{\rd{t}}\int_{\R^d} \phi(x) \rd{u_t(x) }
        = \frac{\rd}{\rd{t}} \frac{1}{n}\sum_i \phi(X_i(t)) = \frac{1}{n}\sum_i \g \phi(X_i(t)) \cdot \frac{\rd{X_i}}{\rd{t}} = \int_{\R^d} \g \phi(x) \cdot v_t(x) \rd{u_t(x)}.
    \end{align}
\end{proof}

\section*{Acknowledgement}
Z. Wang is partially supported by the National Key R\&D Program of China, Project Number 2021YFA1002800, NSFC grant No.12171009, Young
Elite Scientist Sponsorship Program by China Association for Science and Technology
(CAST) No. YESS20200028. 
The work of V. Ilin and J. Hu was partially supported by AFOSR grant FA9550-21-1-0358 and DOE grant DE-SC0023164.

\printbibliography

\end{document}